\documentclass[11pt]{article}

\usepackage[dvipsnames]{xcolor}

\usepackage{epsfig,amsmath,amsfonts,amssymb,latexsym,color,amsthm,hhline,multirow,subfigure,graphicx,bm}
\usepackage{enumerate}

\usepackage{calrsfs} % secondary calligraphic font
\DeclareMathAlphabet{\pazocal}{OMS}{zplm}{m}{n}

\usepackage{tikz}
\usetikzlibrary{arrows, decorations.pathmorphing}
\usepackage{pgfplots}
\usetikzlibrary{hobby}

\theoremstyle{plain}
\newtheorem{theorem}{Theorem}%[section]
\theoremstyle{definition}

\theoremstyle{definition}

\theoremstyle{definition}

\theoremstyle{definition}

\theoremstyle{plain}

\theoremstyle{plain}
\newtheorem{proposition}[theorem]{Proposition}
\theoremstyle{plain}
\newtheorem{lemma}[theorem]{Lemma}
\theoremstyle{plain}

\theoremstyle{definition}
\newtheorem{remark}[theorem]{Remark}
\theoremstyle{plain}

\newcommand{\E}{{\mathbb E}}
\newcommand{\R}{{\mathbb R}}

\newcommand{\Z}{{\mathbb Z}}
\newcommand{\PP}{{\mathbb P}}

\newcommand{\vep}{\varepsilon}

\def\geo{\textup{Geo}}

\begin{document}

%%%%%%%%%%%%%%%%%%%%%%%% TITLE PAGE %%%%%%%%%%%%%%%%%%%%%%%%

\title{Surviving from the tip of a cone in\\ competing first-passage percolation}

\author{\renewcommand{\thefootnote}{\arabic{footnote}}
\renewcommand{\thefootnote}{\arabic{footnote}}
Daniel Ahlberg%
\footnotemark[1]\hspace{0.5cm}
%\\
\renewcommand{\thefootnote}{\arabic{footnote}}
Maria Deijfen%
\footnotemark[1]\hspace{0.5cm}
%\\
\renewcommand{\thefootnote}{\arabic{footnote}}
Matteo Sfragara\,%
\footnotemark[2]
}

\footnotetext[1]{%
Department of Mathematics, Stockholm University, Sweden}
\footnotetext[2] {Department of Mathematics, University of Padova, Italy}

\date{July 2026}
\maketitle

\begin{abstract}
\noindent In two-type first passage percolation on $\mathbb{Z}^2$, two entities compete to capture the sites of the lattice. The entities spread between nearest neighbor sites at times specified by random passage times associated with the edges. We consider the case when both types have the same passage time distribution, with one type starting at the origin and the other from an infinite cone with tip at the origin and pointing in direction $\theta$. Itai Benjamini has suggested that the type starting at the origin can grow unboundedly if and only if the slope of the cone is strictly smaller than $\pi/2$, so that the cone does not fill a whole half-plane. The main result is that this is correct for any $\theta$ such that the asymptotic shape of the one-type process has a tangent line with direction $\theta$. The proofs are based on a description of infinite time-minimizing paths in terms of Busemann functions together with local modification arguments.

\vspace{0.3cm}

\noindent \emph{Keywords:} Richardson model, first-passage percolation, competing growth, coexistence, geodesics, Busemann function.

\vspace{0.2cm}

\noindent \emph{AMS 2020 Subject Classification:} 60K35, 82B43.
\end{abstract}

\section{Introduction}
\label{sec:intro}

The two-type Richardson model is a model for competition between two growing entities, referred to as type~1 and type~2, on the integer lattice $\Z^d$, introduced by H\"aggstr\"om and Pemantle~\cite{HP98}. In the model, an uninfected site becomes type $i$ infected at rate $\lambda_i$ times the number of type $i$ infected neighbors and, once infected, a site remains infected by the given type forever. Let $G_i$ denote the event that sites arbitrarily far from the origin are eventually type $i$ infected. When both types start from bounded initial sets, it is clear that each of $G_1$ and $G_2$ fails with positive probability, regardless of the values of $\lambda_1$ and $\lambda_2$, since these events can then be achieved in that one of the types is enclosed by the other at some finite time. The central question is whether the event $G=G_1\cap G_2$, that is that both types grow unboundedly, has positive probability. 

Note that by rescaling of time, it will suffice to consider the case when $\lambda_1 = 1$ and $\lambda_2 = \lambda$, for $\lambda>0$. In \cite{DH06b}, Deijfen and H\"aggstr\"om showed that the answer to the question of coexistence does not depend on the precise choice of the initial sets, as long as these are both bounded. It was conjectured by H\"aggstr\"om and Pemantle~\cite{HP98} that $G$ occurs with positive probability if and only if $\lambda=1$. The ``if''-direction was proved in~\cite{HP98} for $d=2$, and independently by Garet and Marchand~\cite{GM05} and Hoffman~\cite{H05} for $d\ge2$. The ``only if''-direction remains open, but partial and related results have been obtained in \cite{ADH20,DH06a,GM08,HP00}.

Deijfen and H\"aggstr\"om~\cite{DH07} studied the Richardson model in a setting where one of the types starts from an infinite set. If both types start from infinite sets, the event $G$ trivially occurs, so for the event to be interesting one of the two types must start from a finite set. The question then is if the type starting from a finite set can grow unboundedly. In order to describe the results in~\cite{DH07}, we express a point $x \in \Z^d$ by $x=(x_1, \dots, x_d)$, and let
$$
\pazocal{H} := \{x\in\Z^d : x_2 \le 0\}\quad\text{and}\quad\pazocal{L} := \{x\in\Z^d: x_2 \leq 0, x_i = 0\,\, \forall  i \neq 2\}.
$$
Write $I(\pazocal{H})$ (resp.\ $I(\pazocal{L})$) for the initial configuration where all points in $\pazocal{H} \setminus \{0\}$ (resp.\ $\pazocal{L} \setminus \{0\}$) are type 1 infected and 0 is type 2 infected; see Figure~\ref{fig:HandL}. It was shown in \cite{DH07} that, when starting from $I(\pazocal{H})$, the event $G_2$ occurs with positive probability if and only if $\lambda >1$, while, when starting from $I(\pazocal{L})$, the event $G_2$ has positive probability if and only if $\lambda \geq 1$. 

\begin{figure}[htbp]
\begin{center}
\begin{minipage}{.4\textwidth}
\begin{tikzpicture}[scale=.5]
% axes
\draw (-4,0) -- (4,0); % x axis
\draw (0,-4) -- (0,4); % y axis
% dashed lines 
\draw[draw = blue, dashed] (-4.9,0) -- (-4.1,0);
\draw[draw = blue, dashed] (4.3,0) -- (4.9,0);
\draw[dashed] (0, 4.3) -- (0, 4.9);
\draw[dashed] (0, -4.3) -- (0, -4.9);
% initial sites
\textcolor{red}{\draw (0,0) node {$\bullet$};}
\textcolor{blue}{\draw (0.8,0) node {$\bullet$};}
\textcolor{blue}{\draw (1.6,0) node {$\bullet$};}
\textcolor{blue}{\draw (2.4,0) node {$\bullet$};}
\textcolor{blue}{\draw (3.2,0) node {$\bullet$};}
\textcolor{blue}{\draw (4,0) node {$\bullet$};}
\textcolor{blue}{\draw (-0.8,0) node {$\bullet$};}
\textcolor{blue}{\draw (-1.6,0) node {$\bullet$};}
\textcolor{blue}{\draw (-2.4,0) node {$\bullet$};}
\textcolor{blue}{\draw (-3.2,0) node {$\bullet$};}
\textcolor{blue}{\draw (-4,0) node {$\bullet$};}
\textcolor{blue}{\draw (0,-0.8) node {$\bullet$};}
\textcolor{blue}{\draw (0.8,-0.8) node {$\bullet$};}
\textcolor{blue}{\draw (1.6,-0.8) node {$\bullet$};}
\textcolor{blue}{\draw (2.4,-0.8) node {$\bullet$};}
\textcolor{blue}{\draw (3.2,-0.8) node {$\bullet$};}
\textcolor{blue}{\draw (4,-0.8) node {$\bullet$};}
\textcolor{blue}{\draw (-0.8,-0.8) node {$\bullet$};}
\textcolor{blue}{\draw (-1.6,-0.8) node {$\bullet$};}
\textcolor{blue}{\draw (-2.4,-0.8) node {$\bullet$};}
\textcolor{blue}{\draw (-3.2,-0.8) node {$\bullet$};}
\textcolor{blue}{\draw (-4,-0.8) node {$\bullet$};}
\textcolor{blue}{\draw (0,-1.6) node {$\bullet$};}
\textcolor{blue}{\draw (0.8,-1.6) node {$\bullet$};}
\textcolor{blue}{\draw (1.6,-1.6) node {$\bullet$};}
\textcolor{blue}{\draw (2.4,-1.6) node {$\bullet$};}
\textcolor{blue}{\draw (3.2,-1.6) node {$\bullet$};}
\textcolor{blue}{\draw (4,-1.6) node {$\bullet$};}
\textcolor{blue}{\draw (-0.8,-1.6) node {$\bullet$};}
\textcolor{blue}{\draw (-1.6,-1.6) node {$\bullet$};}
\textcolor{blue}{\draw (-2.4,-1.6) node {$\bullet$};}
\textcolor{blue}{\draw (-3.2,-1.6) node {$\bullet$};}
\textcolor{blue}{\draw (-4,-1.6) node {$\bullet$};}
\textcolor{blue}{\draw (0,-2.4) node {$\bullet$};}
\textcolor{blue}{\draw (0.8,-2.4) node {$\bullet$};}
\textcolor{blue}{\draw (1.6,-2.4) node {$\bullet$};}
\textcolor{blue}{\draw (2.4,-2.4) node {$\bullet$};}
\textcolor{blue}{\draw (3.2,-2.4) node {$\bullet$};}
\textcolor{blue}{\draw (4,-2.4) node {$\bullet$};}
\textcolor{blue}{\draw (-0.8,-2.4) node {$\bullet$};}
\textcolor{blue}{\draw (-1.6,-2.4) node {$\bullet$};}
\textcolor{blue}{\draw (-2.4,-2.4) node {$\bullet$};}
\textcolor{blue}{\draw (-3.2,-2.4) node {$\bullet$};}
\textcolor{blue}{\draw (-4,-2.4) node {$\bullet$};}
\textcolor{blue}{\draw (0,-3.2) node {$\bullet$};}
\textcolor{blue}{\draw (0.8,-3.2) node {$\bullet$};}
\textcolor{blue}{\draw (1.6,-3.2) node {$\bullet$};}
\textcolor{blue}{\draw (2.4,-3.2) node {$\bullet$};}
\textcolor{blue}{\draw (3.2,-3.2) node {$\bullet$};}
\textcolor{blue}{\draw (4,-3.2) node {$\bullet$};}
\textcolor{blue}{\draw (-0.8,-3.2) node {$\bullet$};}
\textcolor{blue}{\draw (-1.6,-3.2) node {$\bullet$};}
\textcolor{blue}{\draw (-2.4,-3.2) node {$\bullet$};}
\textcolor{blue}{\draw (-3.2,-3.2) node {$\bullet$};}
\textcolor{blue}{\draw (-4,-3.2) node {$\bullet$};}
\textcolor{blue}{\draw (0,-4) node {$\bullet$};}
\textcolor{blue}{\draw (0.8,-4) node {$\bullet$};}
\textcolor{blue}{\draw (1.6,-4) node {$\bullet$};}
\textcolor{blue}{\draw (2.4,-4) node {$\bullet$};}
\textcolor{blue}{\draw (3.2,-4) node {$\bullet$};}
\textcolor{blue}{\draw (4,-4) node {$\bullet$};}
\textcolor{blue}{\draw (-0.8,-4) node {$\bullet$};}
\textcolor{blue}{\draw (-1.6,-4) node {$\bullet$};}
\textcolor{blue}{\draw (-2.4,-4) node {$\bullet$};}
\textcolor{blue}{\draw (-3.2,-4) node {$\bullet$};}
\textcolor{blue}{\draw (-4,-4) node {$\bullet$};}
\end{tikzpicture}
\end{minipage}
\begin{minipage}{.4\textwidth}
\begin{tikzpicture}[scale=.5]
% axes
\draw (-4,0) -- (4,0); % x axis
\draw (0,-4) -- (0,4); % y axis
% dashed lines
\draw[dashed] (-4.9,0) -- (-4.1,0);
\draw[dashed] (4.3,0) -- (4.9,0);
\draw[dashed] (0, 4.3) -- (0, 4.9);
\draw[draw = blue, dashed] (0, -4.3) -- (0, -4.9);
% sites
\textcolor{red}{\draw (0,0) node {$\bullet$};}
\textcolor{blue}{\draw (0, -0.8) node {$\bullet$};}
\textcolor{blue}{\draw (0, -1.6) node {$\bullet$};}
\textcolor{blue}{\draw (0, -2.4) node {$\bullet$};}
\textcolor{blue}{\draw (0, -3.2) node {$\bullet$};}
\textcolor{blue}{\draw (0, -4) node {$\bullet$};}
%\draw[draw=blue,thick] (0,-3) -- (4.5,1.5) -- (14,1.5);
\end{tikzpicture}
\end{minipage}
\end{center}
\vspace{-0.4cm}
\caption{\small Initial configurations $I(\pazocal{H})$ and $I(\pazocal{L})$ on $\Z^2$ where type 1 and type 2 sites are coloured blue and red, respectively.}
\label{fig:HandL}
\end{figure}

We shall henceforth restrict attention to the planar case $d=2$ with equal growth rates $\lambda=1$. By the results in~\cite{DH07}, a type 2 infection starting at the origin cannot survive in case type 1 occupies the remainder of the horizontal axis (or equivalently, the lower half-plane), but it can survive if type 1 occupies only the vertical half-axis; for instance by rushing off in the opposite direction. This leads to the following question, formulated in~\cite{DH07} and attributed to Itai Benjamini: Let $\pazocal{C}_{\alpha}$ denote the cone with apex at the origin and angle $\alpha$ around the negative $x_2$-axis (so that the total angle of the cone is $2\alpha$). Consider the initial configuration $I(\pazocal{C}_{\alpha})$ with all points in $\pazocal{C}_{\alpha} \setminus \{0\}$ type 1 infected and the origin type 2 infected; see Figure~\ref{fig:cone}. Benjamini asked what is the maximal angle $\alpha$ such that type 2 survives with positive probability when started from the initial condition $I(\pazocal{C}_{\alpha})$?

\begin{figure}[htbp]
\begin{center}
\begin{tikzpicture}[scale=.5]
% axes
\draw (-4,0) -- (4,0); % x axis
\draw (0,-4) -- (0,4); % y axis
% dashed lines
\draw[dashed] (-4.9,0) -- (-4.1,0);
\draw[dashed] (4.3,0) -- (4.9,0);
\draw[dashed] (0, 4.3) -- (0, 4.9);
\draw[draw = blue, dashed] (0, -4.3) -- (0, -4.9);
% cone
\draw (-4,-4) -- (0,0) -- (4,-4);
\draw[draw = blue, dashed] (-4.2,-4.2) -- (-4.8,-4.8);
\draw[draw = blue, dashed] (4.2,-4.2) -- (4.8,-4.8);
% sites
\textcolor{red}{\draw (0,0) node {$\bullet$};}
\textcolor{blue}{\draw (0, -0.8) node {$\bullet$};}
\textcolor{blue}{\draw (0, -1.6) node {$\bullet$};}
\textcolor{blue}{\draw (0, -2.4) node {$\bullet$};}
\textcolor{blue}{\draw (0, -3.2) node {$\bullet$};}
\textcolor{blue}{\draw (0, -4) node {$\bullet$};}
\textcolor{blue}{\draw (-0.8, -0.8) node {$\bullet$};}
\textcolor{blue}{\draw (-1.6, -1.6) node {$\bullet$};}
\textcolor{blue}{\draw (-2.4, -2.4) node {$\bullet$};}
\textcolor{blue}{\draw (-3.2, -3.2) node {$\bullet$};}
\textcolor{blue}{\draw (-4, -4) node {$\bullet$};}
\textcolor{blue}{\draw (0.8, -0.8) node {$\bullet$};}
\textcolor{blue}{\draw (1.6, -1.6) node {$\bullet$};}
\textcolor{blue}{\draw (2.4, -2.4) node {$\bullet$};}
\textcolor{blue}{\draw (3.2, -3.2) node {$\bullet$};}
\textcolor{blue}{\draw (4, -4) node {$\bullet$};}
\textcolor{blue}{\draw (-0.8, -1.6) node {$\bullet$};}
\textcolor{blue}{\draw (-0.8, -2.4) node {$\bullet$};}
\textcolor{blue}{\draw (-0.8, -3.2) node {$\bullet$};}
\textcolor{blue}{\draw (-0.8, -4) node {$\bullet$};}
\textcolor{blue}{\draw (-1.6, -2.4) node {$\bullet$};}
\textcolor{blue}{\draw (-1.6, -3.2) node {$\bullet$};}
\textcolor{blue}{\draw (-1.6, -4) node {$\bullet$};}
\textcolor{blue}{\draw (-2.4, -3.2) node {$\bullet$};}
\textcolor{blue}{\draw (-2.4, -4) node {$\bullet$};}
\textcolor{blue}{\draw (-3.2, -4) node {$\bullet$};}
\textcolor{blue}{\draw (0.8, -1.6) node {$\bullet$};}
\textcolor{blue}{\draw (0.8, -2.4) node {$\bullet$};}
\textcolor{blue}{\draw (0.8, -3.2) node {$\bullet$};}
\textcolor{blue}{\draw (0.8, -4) node {$\bullet$};}
\textcolor{blue}{\draw (1.6, -2.4) node {$\bullet$};}
\textcolor{blue}{\draw (1.6, -3.2) node {$\bullet$};}
\textcolor{blue}{\draw (1.6, -4) node {$\bullet$};}
\textcolor{blue}{\draw (2.4, -3.2) node {$\bullet$};}
\textcolor{blue}{\draw (2.4, -4) node {$\bullet$};}
\textcolor{blue}{\draw (3.2, -4) node {$\bullet$};}

% angle
\draw [domain=-40:-5] plot ({1.6*cos(\x)}, {1.6*sin(\x)});
\draw (2.7,-0.75) node {\scriptsize{$\pi/2 - \alpha$}};

\end{tikzpicture}
\end{center}
\vspace{-0.4cm}
\caption{\small Initial configuration $I(\pazocal{C}_{\alpha})$ in $\Z^2$ with type 1 sites blue and the type 2 site red.}
\label{fig:cone}
\end{figure} 

Benjamini suggested that the critical case is when the cone fills the whole negative half-plane, meaning that type 2 can survive if and only if $\alpha<\pi/2$. Our main result states, subject to an unproven assumption on the asymptotic shape for the one-type process, that this is indeed the case. In fact, the result applies in the more general setup where the cone points in an arbitrary direction $\theta$, and where the growth is governed by first-passage percolation with a general passage-time distribution. (The Richardson model corresponds to the case of exponentially distributed passage times).

\subsection{Spatial growth and competition}

We proceed to define the model of first-passage percolation, and explain its relation to the two-type Richardson model, in order to state a formal theorem. We equip the edges of the $\Z^2$ lattice with i.i.d.\ random weights $\{\omega_e\}$, referred to as passage times, and drawn from some continuous distribution supported on $[0,\infty)$ such that
\begin{equation}\label{pt_assumption}
\E\big[\min\{\omega_1,\ldots,\omega_4\}^2\big]<\infty. 
\end{equation}

The weighted graph induces a random metric on $\Z^2$. For $x,y\in\mathbb{Z}^2$, write $T(x,y)$ for the distance between $x$ and $y$ in this metric, that is, we define
\begin{equation}\label{def:T}
T(x,y):=\inf\{T(\Gamma):\Gamma\text{ is a path connecting $x$ and $y$}\},\quad\text{where }T(\Gamma):=\sum_{e\in\Gamma}\omega_e.
\end{equation}
Here and below, a `path' refers to a {\bf nearest-neighbor path} in $\Z^2$, being an alternating sequence of vertices and edges such that each edge in the sequence connects the vertices just before and after. We may unambiguously represent a nearest-neighbor path with either its vertices or its edges, and we will switch between these representations without comment.

The assumption of a continuous weight distribution ensures that there is an almost surely unique path attaining the infimum in~\eqref{def:T}. We denote the minimizing path by $\geo(x,y)$ and refer to it as the {\bf geodesic} between $x$ and $y$. The moment condition in~\eqref{pt_assumption} ensures the existence of a (deterministic) norm $\mu:\mathbb{R}^2\to [0,\infty)$, satisfying $\mu(x)>0$ for $x\neq0$, and such that
\begin{equation}\label{shapethm}
\limsup_{|x|\to\infty}\frac{|T(0,x)-\mu(x)|}{|x|}=0\quad\text{almost surely}.
\end{equation}
This is the celebrated `shape theorem' from \cite{CD81,R73}.

Let us already now mention the following strengthening of the shape theorem, proved in~\cite{AH16}, which will be useful in the proofs below: For every $\vep>0$ there exists an almost surely finite $M\ge1$ such that for all $x,y\in\Z^2$ we have
\begin{equation}\label{eq:shapethm}
\big|T(x,x+y)-\mu(y)\big|\le\vep\max\{|x|,|y|\}+M.
\end{equation}

Interpreting $T(x,y)$ as the time it takes to travel between $x$ and $y$ gives rise to a model for spatial growth. A growing entity started at the origin will at time $t$ occupy the region $\mathcal{A}_t:=\{x\in\Z^2:T(0,x)\le t\}$. Note that, if a site $x$ is occupied at time $t$, then a neighboring site $y$ will be occupied at time $t+\omega_{(x,y)}$, unless already occupied at that time. We may extend the one-type spatial growth model to a model for competing growth by considering two growing entities, initially occupying the subsets $I_1,I_2\subseteq\Z^2$, where $I_1\cap I_2=\emptyset$, and spreading through the graph according to the same principle. We allow each site to be occupied only once, and once a site is occupied by type 1 or type 2, it remains occupied by that type forever. (The assumption of a continuous passage-time distribution assures that there are no ties.) We refer to this competition process as {\bf competing first-passage percolation}. Note that the two-type Richardson model for equal strength competitors (i.e., $\lambda=1$) corresponds to the special case of exponentially distributed passage times.

The eventual fate of the competing growth model is determined by the random metric $T$. More specifically, for initial configurations $I_1$ and $I_2$, type 1 will beat type 2 to a vertex $z$, and hence occupy that vertex, if and only if $T(I_1,z)<T(I_2,z)$, where we for $I\subseteq\Z^2$ define
$$
T(I,z):=\inf_{x\in I}T(x,z).
$$
(Again, since passage times are drawn from a continuous distribution, we have $T(x,z)\neq T(y,z)$ for all $z\in\Z^2$ with probability one.) Consequently, the sets of points eventually occupied by type 1 and type 2, respectively, can be expressed as
$$
\mathcal{E}_1:=\big\{z\in\Z^2:T(I_1,z)<T(I_2,z)\big\}\quad\text{and}\quad \mathcal{E}_2:=\big\{z\in\Z^2:T(I_2,z)<T(I_1,z)\big\}.
$$
We shall say that type $i$ {\bf survives} if $\mathcal{E}_i$ is infinite, and that the two types {\bf coexist} in the case that both types survive. Understanding survival and coexistence will largely be a matter of understanding the asymptotic behaviour of the metric $T$ and its geodesics.

\subsection{Competition and the asymptotic shape}

As alluded to above, understanding geodesics in the metric space $(\Z^2,T)$ will be essential to understanding the questions of survival and coexistence. Much of our understanding of geodesics goes via the {\bf asymptotic shape}, defined as the set
$$
\mathcal{A}:=\{x\in\R^2:\mu(x)\le1\}.
$$
The relevance of the asymptotic shape comes from the fact that the statement in~\eqref{shapethm} can equivalently be formulated as a convergence result for the region $\mathcal{A}_t$ occupied at time $t$, in that $\frac{1}{t}\mathcal{A}_t\to\mathcal{A}$ as $t\to\infty$ in an appropriate sense. 
The asymptotic shape inherits all symmetries of the lattice. It follows from $\mu$ being a (non-trivial) norm that $\mathcal{A}$ is compact, convex and has non-empty interior. The asymptotic shape is widely believed to be \emph{strictly} convex and its boundary to be differentiable for a large class of passage-time distributions. However, this remains open; see \cite{ADH17} for details and references. 

Differentiability of the shape will be particularly relevant in this paper. Recall that (the boundary of) a convex set in the plane is {\bf differentiable} if there is a unique supporting line, hence referred to as a {\bf tangent}, through every point of the boundary. Let $S^1 := \{x \in \R^2: |x|=1\}$ denote the unit circle in $\R^2$. We shall throughout identify points/directions in $S^1$ with the angles $[0,2\pi)$ via the standard mapping $\theta\mapsto(\cos\theta,\sin\theta)$. We shall say that a tangent line of the asymptotic shape has {\bf direction} $\theta\in S^1$ if the tangent line intersects and is orthogonal to the half-line through the origin and $\theta$.
%We hence quantify the gradient of a tangent with the normal vector.
Note that the asymptotic shape is differentiable if and only if it has a tangent with direction $\theta$ for every $\theta\in[0,2\pi)$.

Given $\theta \in S^1$, let $\pazocal{C}_{\alpha}^{\theta}$ denote the cone with apex at the origin and angle $\alpha$ to the half-line from the origin through $-\theta$. Consider the initial configuration $I(\pazocal{C}_{\alpha}^{\theta})$ with all points in $\pazocal{C}_{\alpha}^{\theta} \setminus \{0\}$ type~1 infected and the origin type~2 infected; compare with Figure~\ref{fig:cone}.
Write $G_{\theta,\alpha}$ for the event that type~2 survives in the two-type competing first-passage percolation process started from the configuration $I(\pazocal{C}_{\alpha}^{\theta})$. That is, in the notation above, $G_{\theta,\alpha}=\{|\mathcal{E}_2|=\infty\}$ in the case that $(I_1,I_2)$ corresponds to $I(\pazocal{C}_\alpha^\theta)$.

\begin{theorem}
\label{thm1}
Assume that the passage-time distribution is continuous and satisfies \eqref{pt_assumption}. If the asymptotic shape $\mathcal{A}$ has a tangent with direction $\theta$, then $\PP(G_{\theta,\alpha}) > 0$ if and only if $\alpha < \pi/2$.
\end{theorem}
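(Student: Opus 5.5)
We treat the two directions separately. The cone $\pazocal{C}_\alpha^\theta$ is increasing in $\alpha$, and enlarging $I_1$ can only shrink $\mathcal{E}_2$. It therefore suffices to show two things: (i) $\PP(G_{\theta,\alpha})=0$ for $\alpha=\pi/2$, where the cone is the closed half-plane $\{x:\langle x,\theta\rangle\le 0\}$ (in lattice terms, its intersection with $\Z^2$); and (ii) $\PP(G_{\theta,\alpha})>0$ for every $\alpha<\pi/2$. Only part (i) should need the tangent assumption. That is where Busemann functions enter: a tangent with direction $\theta$ makes the linear functional $\rho_\theta(x)=\langle x,v\rangle$ unique, where $v$ is the normal of the supporting line. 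Here $\rho_\theta$ is the limit of any Busemann function built from geodesics whose directions lie in the face of $\mathcal{A}$ where the tangent touches.

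For (ii), the plan is to let type 2 escape in a direction $\phi$ pointing strictly away from the cone. Since $\alpha<\pi/2$, we can pick $\phi$ with angle greater than $\alpha$ to $-\theta$ on both sides, so the ray $\{t\phi\}$ has Euclidean distance $\ge c t$ from $\pazocal{C}_\alpha^\theta$ for some $c>0$.

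Using the sharpened shape theorem \eqref{eq:shapethm} with $\vep$ small relative to $c$ and the norm constants, we get on an event of positive probability a bound for all $t$ large: $T(0,t\phi)\le \mu(t\phi)+\vep t+M$. At the same time $T(\pazocal{C}_\alpha^\theta\setminus\{0\},t\phi)\ge \min_{y\in\pazocal{C}}\mu(t\phi-y)-\vep\max(|y|,t)-M$. Since $\mu$ is a norm, $\mu(t\phi-y)\ge \mu(t\phi)+c't$ fails in general: points of the cone near the origin are at distance about $\mu(t\phi)$. So the comparison cannot rest on the shape theorem alone, because $M$ is random and large.

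The fix is a local modification. We consider points $z_t$ on a ray slightly inside the type-2 region and compare $T(0,z_t)$ with $T(y,z_t)$ for $y$ in the cone. Far-away $y$ (with $|y|\ge \delta t$) lose by a linear margin, because the cone is separated from the ray. Near $y$ lose provided the first step out of the origin is much faster than anything leaving the cone near the apex. Concretely, we resample the finitely many edges in a box $B_R$ around the origin: set the edges of a path from $0$ to $\partial B_R$ going in direction $\phi$ to be tiny, and the edges adjacent to cone sites in $B_R$ to be large. A finite-energy argument then preserves positive probability while ensuring $T(0,x)<T(\pazocal{C}\cap B_R,x)$ for all $x$ in a sector around $\phi$. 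This gives infinitely many type 2 sites.

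For (i), we argue by contradiction. Suppose type 2 survives with positive probability from the half-plane configuration. Then, by K\"onig's lemma, there is an infinite geodesic $\Gamma$ from $0$ all of whose vertices satisfy $T(0,z)<T(H\setminus\{0\},z)$, where $H$ is the half-plane. Next we pass to the direction of $\Gamma$. Using the description of infinite geodesics via Busemann functions (Hoffman, Damron--Hanson, Ahlberg--Hoffman), $\Gamma$ has asymptotic directions in some arc, and along it $B(0,z)=T(0,z)$, where $B$ is a Busemann function with linear limit $\rho$. Survival means that for every $y\in H$, $T(0,z)<T(y,z)$ along $\Gamma$, and letting $z\to\infty$ gives $B(0,y)\ge 0$ for all $y\in H\setminus\{0\}$ (strictness is lost in the limit).

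Because $B(0,\cdot)$ is asymptotically linear with slope $\rho$, the condition $B(0,y)\ge0$ on all of $H$ forces $\rho$ to be nonnegative on $H$. The unit ball of $\rho$ must also be a supporting line of $\mathcal{A}$, so it must be the supporting line with normal $\theta$. The tangent assumption says this supporting line is unique. Hence $B$ is, by a Busemann-uniqueness statement (Ahlberg--Hoffman: in a direction with unique supporting line, Busemann functions of geodesics in the corresponding face coincide), a deterministic-type object whose restriction to the boundary line $\partial H$ is a stationary, mean-zero process with linear part zero. Such a process cannot stay $\ge 0$ on the whole line $\partial H\cap\Z^2$ except with probability zero, by ergodicity/translation invariance along $\partial H$. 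This contradicts survival.

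The main obstacle is this last step. We need a stationary, ergodic object along $\partial H$ that is tied to $\Gamma$ starting at $0$. The plan is to use the covariant Busemann function in the tangent direction. By translation invariance, the event $A_x=\{B(x,y)\ge0\ \forall y\in x+H\}$ has the same probability for every $x$ on $\partial H$. If it had positive probability, ergodicity would give two such $x,x'$, each with $B(x,x')\ge0$ and $B(x',x)\ge0$. The cocycle property then forces $B(x,x')=0$, which has probability zero for continuous weights.
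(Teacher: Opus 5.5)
There are genuine gaps in both parts. The first comes from putting the tangent hypothesis in the wrong place. In the paper it is needed for \emph{survival} when $\alpha<\pi/2$, while non-survival at $\alpha=\pi/2$ holds for every $\theta$.

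\textbf{Part (ii).} Your argument cannot work with only the shape theorem and a local modification. The bound \eqref{eq:shapethm} gives $T(y,z_t)-T(0,z_t)\ge\mu(z_t-y)-\mu(z_t)-3\vep\max\{|y|,t\}-2M$, which is useful only for cone points with $|y|\ge\delta t$. Resampling a box $B_R$ deals only with $y\in B_R$, and $R$ must be fixed to keep positive probability, so the head start it gives is bounded in $t$. Cone points with $R\le|y|\le\delta t$ are left uncontrolled. For them, the sign of $T(y,z)-T(0,z)$ at far-out $z$ on type~2's route is a Busemann difference $B_g(0,y)$, and first-order information does not determine it. The paper supplies exactly this input. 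Via Theorem~\ref{thmDH}, the tangent hypothesis gives $g\in\pazocal{T}_0$ with $B_g(0,y)=\rho(y)+o(|y|)$, with error uniform in $y$ rather than $o(t)$, and $\rho<0$ on $\pazocal{C}_\alpha^\theta\setminus\{0\}$. Hence $\sup_{R_k}B_g(0,\cdot)<0$ with high probability, and only the bounded triangle $T_k$ is left for resampling.

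Your first-order claim is also misdirected. Whether distant cone points lose is governed by the supporting functionals of $\mathcal{A}$ at $\phi/\mu(\phi)$, not by the Euclidean separation of the ray from the cone. If $\mathcal{A}$ had a flat edge, a ray pointing well away from the cone could have cone points with $\mu(t\phi-y)<\mu(t\phi)$. As a sanity check, the paper notes an ergodic model with diamond shape whose critical angle in coordinate directions is $\pi/4$. There, first-order comparison with $\phi={\bf e}_2$ favours the origin against the whole lower half-plane.

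\textbf{Part (i).} This part rests on unavailable steps. The geodesic $\Gamma$ from Proposition~\ref{prop:geodesic} is just some element of $\pazocal{T}_0$. Theorems~\ref{thmDH} and~\ref{thmDH2} give existence of geodesics, or a geodesic measure, with linear Busemann functions. They do not say that $B_\Gamma$ is asymptotically linear, nor that it agrees with a stationary covariant Busemann function. The uniqueness you invoke is not established, and its hypothesis is misread. The supporting line with normal $\theta$ is always unique; tangency concerns uniqueness of the supporting line through the contact point.

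Even granting a stationary $B$, your cocycle step needs lattice points $x\neq x'$ with $x'\in x+H$ and $x\in x'+H$, i.e.\ $\langle x-x',\theta\rangle=0$. That is impossible for irrational $\theta$, and then $A_x\cap A_{x'}$ yields only one of the two inequalities.

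Handling arbitrary $\theta$ is where the paper does its work:
\begin{enumerate}[\quad (1)]
\item It replaces $C(v)$ by $C^\ast(v)$, whose law is translation invariant.
\item It shows by resampling that survival passes from $C(0)$ to $C^\ast(0)$ (Lemmas~\ref{lma:advantage} and~\ref{lma:Cast}).
\item It uses the ergodic theorem along directional subsequences to get a positive density of surviving boundary points.
\item By counting, it shows that with probability at least $\vep/2$, at least two distinct clusters $C^\ast(v_k^\alpha)$ reach the fixed window $I_{m,0}$.
\item This transfers to the Damron--Hanson measure and contradicts coalescence, part~(b) of Theorem~\ref{thmDH2}.
\end{enumerate}
That proves non-survival at $\alpha=\pi/2$ for all $\theta$. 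Your monotonicity remark then covers $\alpha>\pi/2$.
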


Note that, by symmetry of $\Z^2$, the shape must have at least one tangent line in each quadrant, implying that there are at least four values of $\theta$ with the property that $\PP(G_{\theta,\alpha}) > 0$ if and only if $\alpha < \pi/2$. If the asymptotic shape is differentiable, then for every direction $\theta$ we have that survival of type~2 is possible if and only if $\alpha<\pi/2$. Some related results have been obtained by L\'opez and Pimentel~\cite{LP17} for exactly solvable last-passage percolation models.

We shall prove Theorem~\ref{thm1} in two parts, which together amount to a more general picture than the one reported in the above theorem. 
We begin, in Section~\ref{sec:transition}, under a certain assumption on the geodesic structure, to identify $\pi/2$ as the critical angle below which coexistence is possible, and above which it is not (see Theorem~\ref{thm2}). Under the widely believed assumption that the asymptotic shape is differentiable, our result shows that the critical angle coincides with the flat initial condition $\alpha=\pi/2$ for all directions $\theta$. However, while we in this work have chosen to work with i.i.d.\ passage times, our approach indicate that for certain stationary and ergodic models of first-passage percolation, where the shape is known to be a polygon, there are angles $\theta$ for which the critical angle is strictly smaller than $\pi/2$. We remark further upon this, and provide references, in Section~\ref{sec:transition}.

We proceed, in Section~\ref{sec:flat_start}, to consider the critical case of a flat initial condition. Here we extend the result of Deijfen and H\"aggstr\"om~\cite{DH07}, generalised to general passage-time distributions by Antunovi\'c and Procaccia~\cite{AP17}, from coordinate directions to arbitrary directions $\theta$. This result (see Theorem~\ref{thm:flat_start}) will show that, for any value of $\theta\in[0,2\pi)$, survival of type~2 is not possible for the flat initial condition $I(\pazocal{C}_{\pi/2}^\theta)$. This argument does not require unverified assumptions on the structure of geodesics. The extension to rational directions, i.e.\ directions $\theta$ corresponding to a unit vector of the form $z/|z|$ for some $z\in\Z^2$, is relatively straightforward. Extending their result to arbitrary directions will require some care.

We begin, in Section~\ref{s:busemann}, with a discussion regarding Busemann functions, which have become an indispensable tool in order to understand the geodesic structure of $T$, and hence the possibility of coexistence in the competing growth model.

\section{Geodesics and Busemann functions}\label{s:busemann}

Coexistence in competing first-passage percolation is closely linked to the existence and structure of infinite geodesics in the first-passage metric. An {\bf infinite geodesic} is a nearest-neighbor path $(v_1,v_2,\ldots)$ such that every finite segment $(v_k,v_{k+1},\ldots,v_{k+\ell})$ is the geodesic between its endpoints $v_k$ and $v_{k+\ell}$. We shall by $\pazocal{T}_0$ denote the collection of infinite geodesics starting from the origin $0$. We shall interchangeably think of $\pazocal{T}_0$ as a set of paths as well as the graph obtained from the union of all infinite geodesics starting at the origin. Due to the assumption of continuous passage times, the resulting subgraph of $\Z^2$ is a tree, and we hence refer to $\pazocal{T}_0$ as the {\bf tree of infinite geodesics}; see Figure \ref{fig:treegeos}. A standard compactness argument shows that $|\pazocal{T}_0|$, the number of infinite geodesics starting at the origin, is always at least 1. For $v\in\Z^2$, we write $\pazocal{T}_v$ for the tree of infinite geodesics starting at $v$.

%%%%%%%%%%%%%%%%%%%%%%%%%%%%%%%%%%%%%%%%%%%%
\begin{figure}[htbp]
\begin{center}
\vspace{0.3cm}
\includegraphics[width=.32\linewidth]{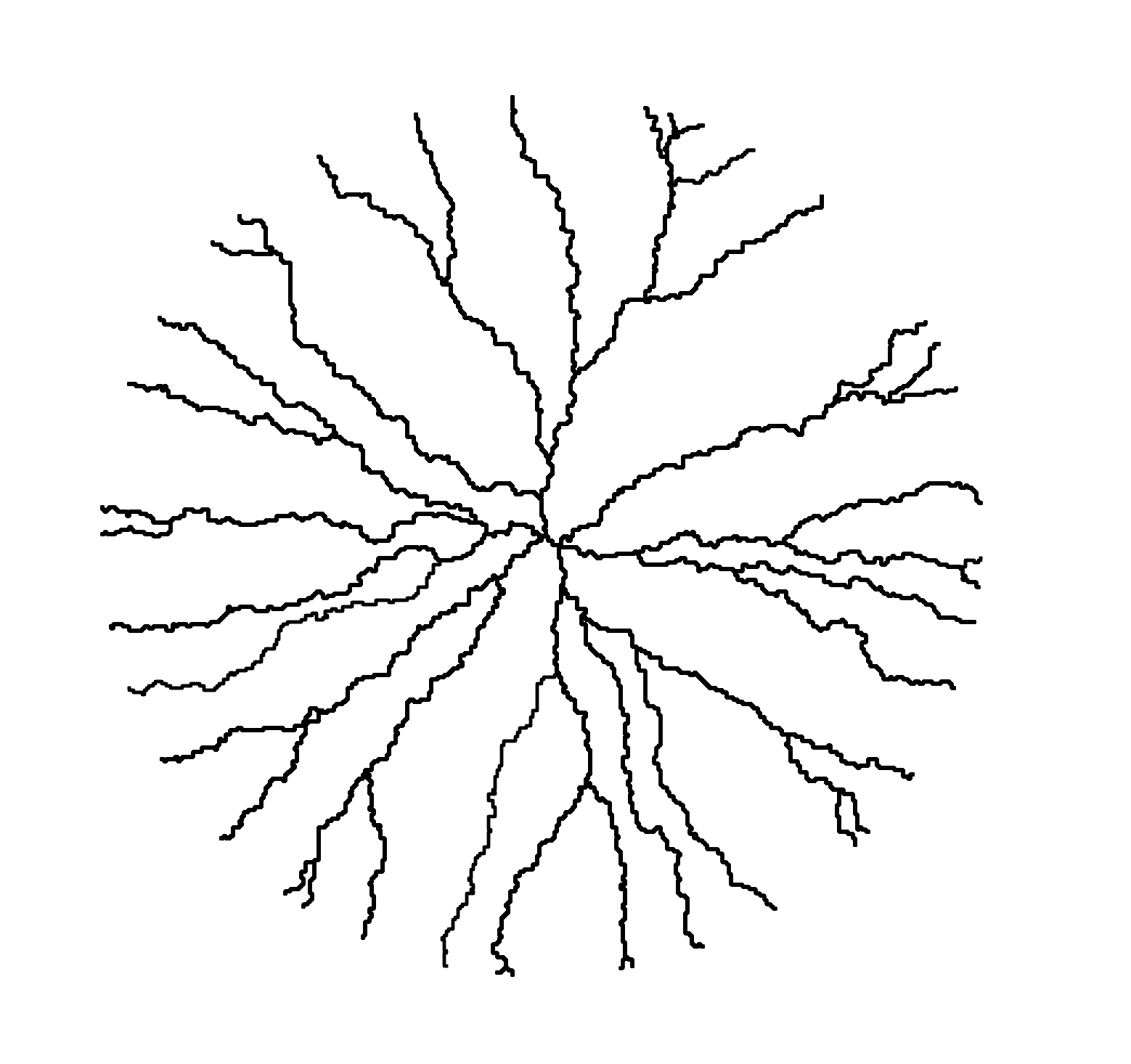}
\includegraphics[width=.32\linewidth]{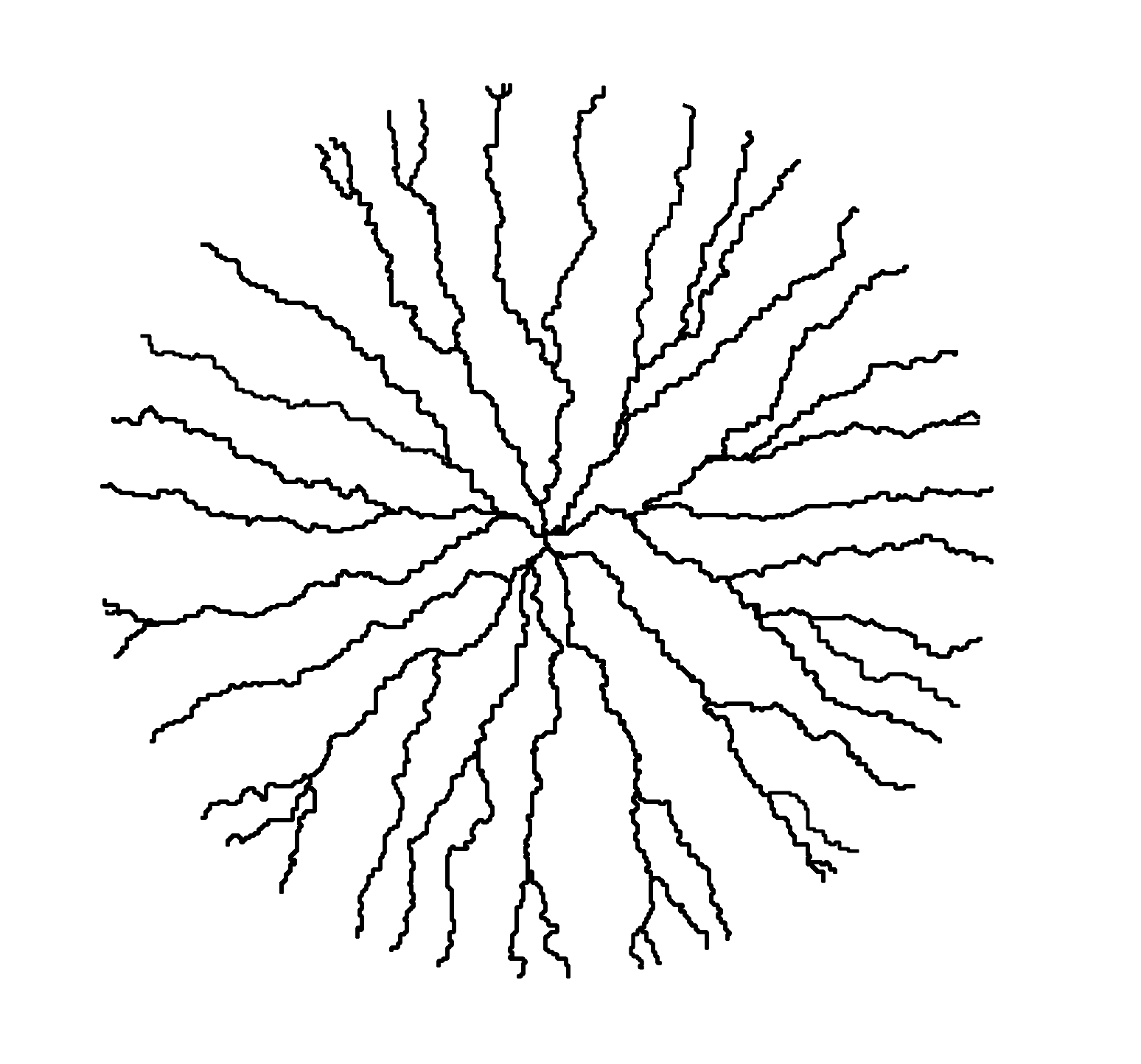}
\includegraphics[width=.32\linewidth]{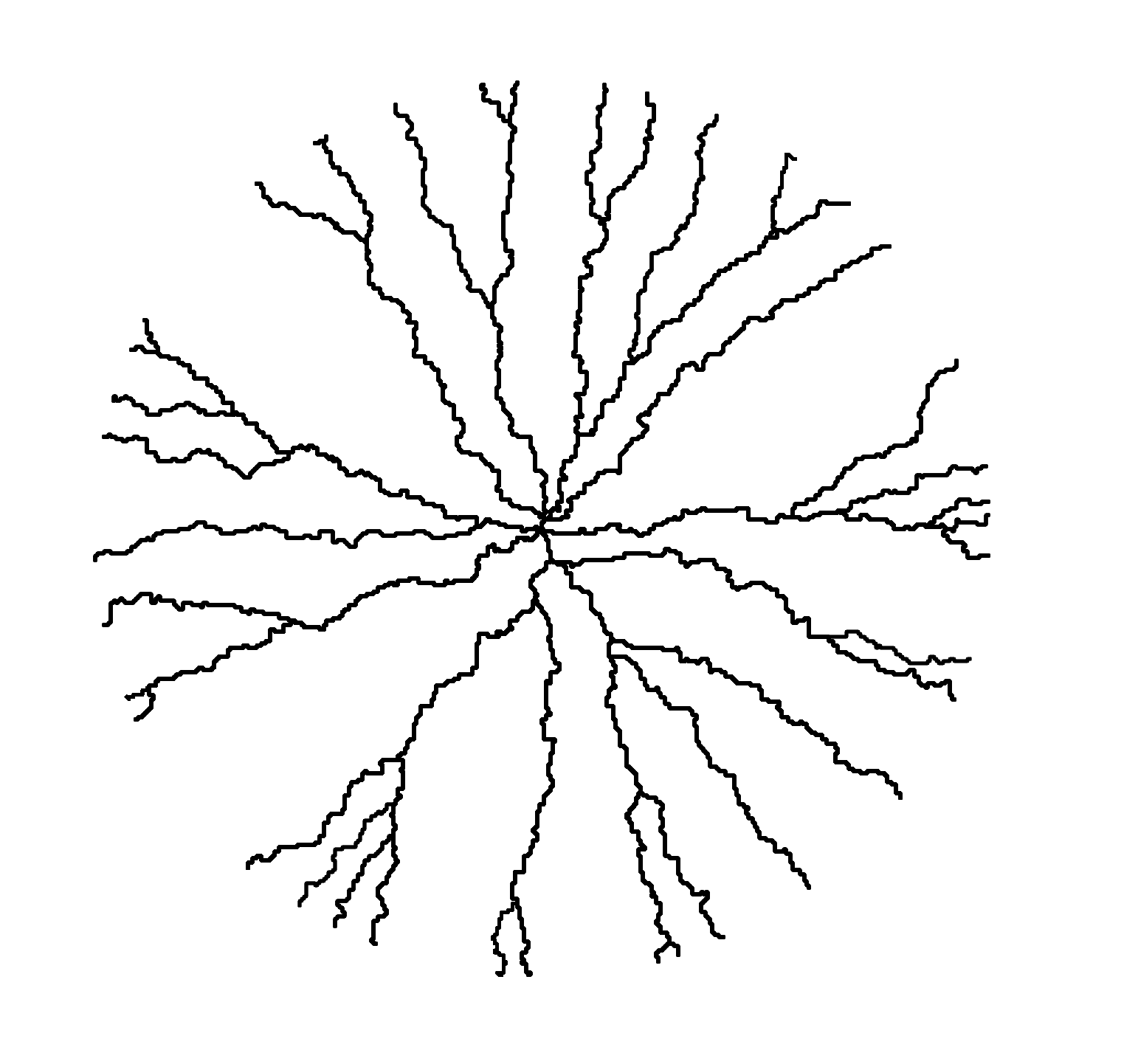}
\vspace{0.1cm}
\caption{\small Simulations of the tree of infinite geodesics $\pazocal{T}_0$ in $Z^2$.}
\label{fig:treegeos}
\end{center}
\vspace{-0.3cm}
\end{figure}
%%%%%%%%%%%%%%%%%%%%%%%%%%%%%%%%%%%%%%%%%%%%

\subsection{Geodesics and coexistence}

Infinite geodesics can be thought of as `highways to infinity' in that they define paths used by the infection to reach sites far away. That an infection started at the origin survives if it is responsible for infecting every point along some geodesic is immediate. We start with a slightly less obvious observation, which is that the infection cannot survive unless it infects all vertices along some `highway' in $\pazocal{T}_0$. 

\begin{proposition}\label{prop:geodesic}
Consider competing first-passage percolation with a continuous weight distribution, in which type~2 is initially positioned at the origin and type~1 occupies some set (finite or infinite) $I\subset\Z^2$. Then, with probability one, either type~2 infects all vertices along some geodesic $g\in\pazocal{T}_0$, or type~2 does not survive.
\end{proposition}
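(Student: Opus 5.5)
The plan is to show that if type~2 survives, then its territory contains an infinite geodesic from the origin. The argument is a compactness (K\"onig's lemma) construction applied to geodesics to points of $\mathcal{E}_2$. Work on the full-probability event where all finite geodesics are unique and no ties $T(x,z)=T(y,z)$ occur.

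\textbf{Step 1: $\mathcal{E}_2$ is closed under geodesics from the origin.} I first show that if $z\in\mathcal{E}_2$, then every vertex $w$ on $\geo(0,z)$ also lies in $\mathcal{E}_2$. Suppose instead that $T(I,w)<T(0,w)$. Then
$$
T(I,z)\le T(I,w)+T(w,z)<T(0,w)+T(w,z)=T(0,z),
$$
where the equality holds because $w$ lies on the geodesic from $0$ to $z$. This contradicts $z\in\mathcal{E}_2$. Hence $\geo(0,z)\subseteq\mathcal{E}_2$.

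\textbf{Step 2: extract an infinite geodesic.} Now assume type~2 survives, so $\mathcal{E}_2$ is infinite. Choose distinct points $z_n\in\mathcal{E}_2$ with $|z_n|\to\infty$. The geodesics $\geo(0,z_n)$ are self-avoiding paths starting at $0$. Since $\Z^2$ is locally finite, the usual diagonal argument applies: at each step $k$ there are only finitely many choices for the first $k$ edges, so some choice is shared by infinitely many $n$. Passing to nested subsequences yields an infinite path $g=(0=v_0,v_1,\ldots)$ whose every initial segment $(v_0,\ldots,v_k)$ is an initial segment of $\geo(0,z_n)$ for infinitely many $n$. The paths have length at least $|z_n|\to\infty$, so this process never terminates.

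\textbf{Step 3: verify $g$ is what we want.} Any finite segment $(v_j,\ldots,v_k)$ of $g$ is a subsegment of some $\geo(0,z_n)$. A subpath of a geodesic is a geodesic between its endpoints, so this segment equals $\geo(v_j,v_k)$, and therefore $g\in\pazocal{T}_0$. Each vertex $v_k$ of $g$ lies on some $\geo(0,z_n)$ with $z_n\in\mathcal{E}_2$, so $v_k\in\mathcal{E}_2$ by Step 1. Thus type~2 infects every vertex of $g$.

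The only delicate point is Step 1, specifically the use of strict inequalities. This is where continuity of the weight distribution (no ties) is used, which makes $\mathcal{E}_1$ and $\mathcal{E}_2$ partition $\Z^2\setminus(I\cup\{0\})$. The possibility that $I$ is infinite causes no trouble: $T(I,\cdot)$ is still an infimum satisfying the triangle inequality used above, and almost surely no ties occur for it either.
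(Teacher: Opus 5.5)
Your proof is correct and follows the paper's argument. Like the paper, you first show that $\geo(0,z)\subseteq\mathcal{E}_2$ for every $z\in\mathcal{E}_2$, and then extract an infinite geodesic via K\H{o}nig's lemma. You carry out K\H{o}nig's lemma by hand with a diagonal argument, whereas the paper applies it to the tree formed by the union of the geodesics $\geo(0,v)$. One small remark: Step~1 does not need the no-ties assumption. If you negate $w\in\mathcal{E}_2$ as $T(I,w)\le T(0,w)$, you get $T(I,z)\le T(0,z)$ directly, which is how the paper phrases it.
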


\begin{proof}
Let $V=\{v\in\Z^2:T(0,v)<T(u,v)\text{ for all }u\in I\}$ denote the set of vertices eventually infected by type~2. Either $V$ is infinite, or type~2 does not survive. Take $v\in V$ and consider the geodesic $\geo(0,v)$. This is the fastest path along which type~2 can reach $v$. Suppose that for some $z\in\geo(0,v)$ we have $T(0,z)\ge T(u,z)$ for some $u\in I$. Then we would also have $T(0,v)\ge T(u,v)$ for some $u\in I$, which would contradict that $v\in V$. Hence, for every $v\in V$, all vertices $z\in\geo(0,v)$ also belong to $V$.

Consider the graph with vertex set $V$ obtained by taking the union over $\geo(0,v)$ for all $v\in V$. On an event of probability one, distinct paths have distinct passage times, and if $V$ is infinite, then by K\H{o}nig's infinity lemma it contains an infinite path $g$. For every $v\in g$ the segment of $g$ from the origin to $v$ coincides with $\geo(0,v)$, so that $g\in\pazocal{T}_0$.
\end{proof}

The above proposition indicates the importance of geodesics for the question of coexistence. In order to establish coexistence, one may explore the conditions for either infection to be guaranteed survival along an infinite geodesic. So-called Busemann functions, introduced by Hoffman~\cite{H05}, are a useful tool in this context.
For every geodesic $g = (v_1, v_2, \dots)$ in $\pazocal{T}_0$ we define its {\bf Busemann function} $B_g: \Z^2 \times \Z^2 \to \R$ as the limit, for all $x,y\in\Z^2$,
$$
B_g(x,y) = \lim_{k \to \infty}\big[T(x,v_k)- T(y,v_k)\big].
$$
This limit exists almost surely for all $g\in\pazocal{T}_0$ simultaneously, as shown in~\cite{H05}.

We will below use Busemann functions in order to read out the coexistence between types in competing first-passage percolation. Again, this idea dates back to the work of Hoffman~\cite{H08}, and was elaborated upon in~\cite{A21}.
Let $g\in\pazocal{T}_0$ be a geodesic, and note that a negative value of $B_g(0,y)$ means that the origin lies `closer' than $y$ to far out vertices along $g$. That means that there exists $k\ge1$ such that, if type~1 is started at $y$ and type 2 at the origin, then type~2 will be the first to reach $v_k$, and all subsequent vertices along $g=(v_1,v_2,\ldots)$. In particular, this would imply survival of type 2 for this initial condition.

By the same argument, if type 1 initially occupies a set $I$ and type 2 initially occupies the origin, then type 2 will survive along $g$ in case that $I\subseteq\{y\in\Z^2:B_g(0,y)<0\}$. This observation will be instrumental in this paper. However, for the observation to be useful, we will need to understand some basics of how Busemann functions behave.

\subsection{Linearity of Busemann functions}

A systematic study of geodesics in first-passage percolation was initiated by Newman and collaborators in the 1990s; see \cite{LN96,LNP96,N95,NP95}. Among other things, they proved various properties about infinite geodesics, subject to the assumption that the asymptotic shape is uniformly curved -- an assumption that remains unproven until this day. These properties remain conjectural, and assert that (i) almost surely, every infinite geodesic $g=(v_1,v_2,\ldots)$ has an {\bf asymptotic direction} $\theta\in S^1$, in that $\lim_{k \to \infty} v_k/|v_k| = \theta$; (ii) for every $\theta \in S^1$, there exists an almost surely unique geodesic in $\pazocal{T}_0$ with asymptotic direction $\theta$; and (iii) for every $\theta\in S^1$, any two geodesics $g$ and $g'$ with the same asymptotic direction $\theta$ {\bf coalesce}, meaning that their symmetric difference $g \, \Delta \, g'$ is finite, almost surely. 

H\"aggstr\"om and Pemantle~\cite{HP98} proved that coexistence occurs with positive probability in the two-type Richardson model at equal strength ($\lambda=1$), and used that result to deduce that there must exist at least two infinite geodesics with positive probability. The predictions from~\cite{LN96,N95} imply that $|\pazocal{T}_0| = \infty$ almost surely, so the result from~\cite{HP98} is a modest first step towards verifying these predictions. Later work on existence of geodesics and coexistence in competing growth has adopted the reverse approach: Establish coexistence from the existence of geodesics, starting with the introduction of Busemann functions in work of Hoffman~\cite{H05,H08}.

In~\cite{H08}, Hoffman used Busemann functions as a tool to distinguish geodesics from each other, simply from the fact that if the Busemann functions of two geodesics differ, then the two geodesics must be distinct. Using Busemann functions, Hoffman could associate the existence of geodesics to tangents of the asymptotic shape $\mathcal{A}$. His work was elaborated upon by Damron and Hanson~\cite{DH14}, sharpening the connection between geodesics and properties of the asymptotic shape. Busemann functions have been an indispensable tool in the study of infinite geodesics ever since, for a large variety of first- and last-passage percolation models; see e.g.~\cite{A21,AH16,AHH22,DH14,DH17}.

In order to describe key results established in~\cite{DH14,H08}, we shall introduce some additional terminology. Recall that a linear functional $\rho: \R^2 \to \R$ is of the form $\rho=u\cdot x$, where $\cdot$ denotes scalar product, and that its level set $\{x \in \R^2: \rho(x) =1\}$ is a straight line. Also recall that a straight line is a supporting line to a convex set in $\mathbb{R}^2$ if the line contains at least one point of the set, and the whole set is contained in one of the two closed half-planes dissected by the line. We shall call a linear functional $\rho:\R^2\to\R$ {\bf supporting} if $\{x \in \R^2: \rho(x) =1\}$ is a supporting line to the asymptotic shape $\mathcal{A}$, and {\bf tangent} if $\{x \in \R^2: \rho(x) =1\}$ is the unique supporting line (the tangent line) of $\mathcal{A}$ through some point.

Given a linear functional $\rho:\R^2\to\R$ and a geodesic $g \in \pazocal{T}_0$, we say that the Busemann function of $g$ is {\bf asymptotically linear} to $\rho$ if
\begin{equation*}
 \limsup_{|y| \to \infty} \frac{1}{|y|} \big| B_g(0,y)-\rho(y)\big| = 0.
\end{equation*}
Since $B_g(0,y)=T(0,y)$ for all $y\in g$, the above can only be possible for supporting functionals. By convexity of the asymptotic shape, the set of supporting functionals can be parametrized by their gradients, and the set of supporting functionals is hence in 1-1 correspondence with the unit circle $S^1$, which in turn we have identified with the interval $[0,2\pi)$. That is, for every $\theta\in[0,2\pi)$ there exists a supporting functional $\rho$, and if the Busemann function of a geodesic $g$ is asymptotically linear to $\rho$ we shall say that the Busemann function of $g$ is asymptotically linear with {\bf direction} $\theta$.

We are now ready to formulate a first result regarding the existence of geodesics. The result is due to Damron and Hanson~\cite{DH14}, building on earlier work of Hoffman~\cite{H08}.

\begin{theorem}
\label{thmDH}
Consider first-passage percolation on $\mathbb{Z}^2$ with a continuous passage time distribution satisfying~\eqref{pt_assumption}. For every linear functional $\rho: \R^2 \to \R$ that is tangent to $\mathcal{A}$ there exists, almost surely, a geodesic in $\pazocal{T}_0$ whose Busemann function is asymptotically linear to $\rho$.
\end{theorem}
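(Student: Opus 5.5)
This is the Damron--Hanson result, and I would prove it by the Hoffman/Damron--Hanson route: build Busemann-type limits along a family of half-planes, and pass to the limit on geodesic-indexed objects.

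\textbf{Step 1: half-plane passage times.} Fix the tangent functional $\rho$, with tangent line $\{\rho=1\}$ touching $\partial\mathcal{A}$ at some point, and let $\theta$ be the direction of its gradient. For $n\ge1$ let $L_n$ be the lattice approximation of the line $\{x:\rho(x)=n\}$, and set
$$
B_n(x,y):=T(x,L_n)-T(y,L_n).
$$
These are differences of point-to-line passage times. They satisfy $|B_n(x,y)|\le T(x,y)$, so the pair $(B_n,\omega)$ is tight. Average over translations $\frac1N\sum_{n\le N}$ to restore stationarity along the lattice, and extract a subsequential weak limit of the joint law of $(\omega,B_n,\{\text{geodesics from each }x\text{ to }L_n\})$. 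This gives a translation-invariant measure on configurations $(\omega,B,\text{directed geodesic graph})$. In it, every vertex has an infinite ``geodesic to $\{\rho=\infty\}$'', and $B$ is additive along these paths.

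\textbf{Step 2: identify the mean.} Additivity and stationarity make $y\mapsto\E[B(0,y)]$ linear, say $\E[B(0,y)]=\varrho(y)$. The ergodic theorem then gives $B(0,y)/|y|\to\varrho(y/|y|)$ along rays, and a shape-theorem style argument upgrades this to uniform asymptotic linearity, using \eqref{eq:shapethm} and the bound $|B|\le T$. Since the limiting geodesic paths satisfy $B(0,v_k)=T(0,v_k)$, $\varrho$ is supporting. We also have $\varrho\ge\rho$ in the sense that $B_n(0,y)\approx\mu$-distance differences to $L_n$, which converge to $\rho(y)$ via the shape theorem: the distance from $y$ to $L_n$ is asymptotically $n-\rho(y)$ after normalising. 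This forces the supporting line of $\varrho$ to pass through the touching point of $\rho$. Because $\rho$ is \emph{tangent}, meaning the supporting line there is unique, we conclude $\varrho=\rho$. This is exactly where the tangency hypothesis is used.

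\textbf{Step 3: transfer to a geodesic in $\pazocal{T}_0$ on the original space.} In the limiting measure, the geodesic graph from $0$ yields an infinite geodesic $g$ whose Busemann function equals $B(0,\cdot)$. The reason is that limits of geodesics to $L_n$ coalesce into the same paths that define $B$, so $B_g=B$ on the relevant vertices. Hence some $g\in\pazocal{T}_0$ has Busemann function asymptotically linear to $\rho$ with positive probability under the marginal on $\omega$. The marginal is the original i.i.d.\ law, so the event $\{\exists g\in\pazocal{T}_0: B_g \text{ asymptotically linear to }\rho\}$ has positive probability. It is also translation invariant: geodesics from $0$ and from $v$ with the same asymptotically linear Busemann function are related via concatenation and Busemann additivity. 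Ergodicity therefore upgrades positive probability to probability one.

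\textbf{Main obstacle.} I expect Step 3 to be the hardest: ensuring that the abstract limit $B$ is genuinely the Busemann function of an actual geodesic, and that the existence statement is measurable in $\omega$ alone. Proving $B_g(0,y)=B(0,y)$ for all $y$ requires showing that the limiting geodesics from $0$ and $y$ coalesce. I would first try a Burton--Keane/Licea--Newman type argument on the translation-invariant limiting geodesic graph, which rules out infinitely many disjoint ``trees''. Alternatively, I would avoid exact equality and show only $|B_g(0,y)-B(0,y)|=o(|y|)$, which suffices for asymptotic linearity.
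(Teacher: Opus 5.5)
Your outline reconstructs the Damron--Hanson construction, which is all the paper offers here: it cites Theorem~4.3, Corollary~4.7 and Proposition~5.1 of~\cite{DH14}, i.e.\ part~(c) of Theorem~\ref{thmDH2}. The reconstruction breaks in Step~2.

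The shape theorem, even in the form~\eqref{eq:shapethm}, pins down $T(0,L_n)$ and $T(y,L_n)$ only up to errors of order $\vep n$. Their difference is of order one, so the shape theorem says nothing about $B_n(0,y)$ or its mean for fixed $y$. The missing idea is that averaging over the level is what makes the mean computable. The average must be over a continuous level $\alpha$, as in the definition of $\nu_n^\ast$, because a lattice shift by $y$ moves the level by $\rho(y)$, which is in general not an integer. Write $\ell_\alpha^+=\{\rho\ge\alpha\}$. By translation invariance of $\PP$, $T(y,\ell_\alpha^+)$ has the law of $T(0,\ell_{\alpha-\rho(y)}^+)$. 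Set $h(\alpha):=\E T(0,\ell_\alpha^+)$; it is nondecreasing, vanishes for $\alpha\le0$, and satisfies $h(\alpha)/\alpha\to1$. Then for $\rho(y)>0$
\[
\frac1n\int_0^n\E\big[T(0,\ell_\alpha^+)-T(y,\ell_\alpha^+)\big]\,d\alpha=\frac1n\int_{n-\rho(y)}^{n}h(\alpha)\,d\alpha\longrightarrow\rho(y),
\]
and similarly for $\rho(y)\le0$. Uniform integrability from $|B(0,y)|\le T(0,y)$ then gives $\E_\nu B(0,y)=\rho(y)$ for every supporting $\rho$, with no tangency needed.

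Tangency enters at a different point. Nothing in your construction makes the limit measure $\nu$ ergodic. So the ergodic theorem gives $B(0,ny)/n\to\varrho(y)$ only for a \emph{random} linear functional $\varrho$ with $\varrho\le\mu$ and $\E\varrho=\rho$, not for the mean. Let $p$ be a point at which $\{\rho=1\}$ is the unique supporting line. Then $\varrho(p)\le\mu(p)=1=\E\varrho(p)$ forces $\varrho(p)=1$ almost surely, so $\{\varrho=1\}$ supports $\mathcal{A}$ at $p$, and therefore $\varrho=\rho$. You invoke tangency for the deterministic mean, where it is not needed, and never address the randomness of the ray limits, where it is. Also, ``$\varrho\ge\rho$'' for two linear functionals on $\R^2$ already means $\varrho=\rho$, so the step as written does not hang together.

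In Step~3, identifying $B$ with the Busemann function of the path $\Gamma_0$ in the limiting geodesic graph genuinely requires coalescence of the paths $\Gamma_x$. This is part~(b) of Theorem~\ref{thmDH2}, the most technical part of~\cite{DH14}. You flag it but do not supply it, and your fallback is not easier. With $\Gamma_0=(w_0,w_1,\ldots)$, the relations $B(0,w_k)=T(0,w_k)$ and $B(y,w_k)\le T(y,w_k)$ give only $B_{\Gamma_0}(0,y)\le B(0,y)$. Nothing in your outline controls the reverse inequality up to $o(|y|)$.

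The upgrade from positive probability to probability one is also not justified as written. Concatenating $\geo(v,0)$ with $g$ does not give a geodesic. A limit of $\geo(v,w_k)$ along $g$ gives a geodesic from $v$ whose Busemann function is only bounded above by $B_g(v,\cdot)$. The upgrade is unnecessary anyway: the $\omega$-marginal of $\nu$ is $\PP$, so a $\nu$-almost sure statement about $\Gamma_0$ is automatically a $\PP$-almost sure statement about $\pazocal{T}_0$.
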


\begin{proof}
The result is a consequence of Theorem~4.3, Corollary~4.7 and Proposition~5.1 in~\cite{DH14}. See also Theorem~\ref{thmDH2} below.
\end{proof}

The theorem above, cited from~\cite{DH14}, represents a smaller part of what was proved in that paper. For the full proof of Theorem~\ref{thm1} we shall rely on a more precise formulation of what they accomplish, and will for this reason revise the notion of geodesic measures. We save, however, this discussion to Section~\ref{sec:flat_start}. We further remark that the above theorem can be reformulated in terms of so-called random coalescing geodesics, enabling the use of ergodic theory in the study of geodesics~\cite{AH16}. However, this will not be of further benefit for the purpose of this paper.

\section{Survival/non-survival for cone-shaped initial conditions}\label{sec:transition}

Having detailed the necessary preliminaries, we are ready to proceed to the proof of Theorem~\ref{thm1}. The proof will be divided into two separate statements. The first is stated and proved in this section, and identifies $\pi/2$ as the critical angle, subject to a widely believed assumption on the existence of geodesics. The critical case, for a flat initial condition, is saved to the next section.

\begin{theorem}\label{thm2}
Consider competing first-passage percolation on $\mathbb{Z}^2$, with continuous passage times satisfying~\eqref{pt_assumption}, and with initial configuration $I(\pazocal{C}_{\alpha}^{\theta})$, for some $\theta\in[0,2\pi)$. Suppose that, with probability one, there exists a geodesic in $\pazocal{T}_0$ whose Busemann function is asymptotically linear with direction $\theta$. Then, $\PP(G_{\theta,\alpha})>0$ for $\alpha<\pi/2$ and $\PP(G_{\theta,\alpha})=0$ for $\alpha>\pi/2$.
\end{theorem}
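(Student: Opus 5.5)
\emph{Survival for $\alpha<\pi/2$.} Let $g\in\pazocal{T}_0$ be a geodesic whose Busemann function is asymptotically linear to a supporting functional $\rho(x)=u\cdot x$ with direction $\theta$, so that $u$ is a positive multiple of $\theta$. For $y\in\pazocal{C}^\theta_\alpha$ the angle between $y$ and $-\theta$ is at most $\alpha<\pi/2$. Hence $\rho(y)\le -c|y|$ with $c=|u|\cos\alpha>0$. Asymptotic linearity then gives a random $R<\infty$ with $B_g(0,y)\le -\tfrac c2|y|<0$ for all $y\in\pazocal{C}^\theta_\alpha$ with $|y|\ge R$.

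The finitely many cone points with $|y|<R$ will be handled by a local modification (finite energy) argument. I would proceed as follows.
\begin{enumerate}
\item Fix $H$ large and a box $\Lambda_r$. Fix a deterministic path $\pi$ from $0$ to $\partial\Lambda_r$ that leaves in direction $\theta$, away from the cone.
\item Consider the configuration $\omega'$ in which the edges of $\pi$ have weights in $[0,\delta]$, all other edges of $\Lambda_r$ have weights in $[H,H+1]$, and edges outside $\Lambda_r$ are unchanged. Since only finitely many edges change, by a bounded amount, $|T(x,z)-T'(x,z)|\le K(r,H)$ for all $x,z$. The same bound then holds for differences of passage times to far points.
\item Apply the hypothesis to $\omega'$ (it holds a.s., hence a.s.\ under the modified measure, which is absolutely continuous on the relevant events). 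This gives a geodesic $g'$ with linear Busemann function in direction $\theta$.
\item Choose $r$ so that, with probability at least $1/2$ uniformly over inside-box configurations, the linear error is at most $\tfrac c4|y|$ for $|y|\ge r$. Then $\tfrac c4 r>K$ forces $B'_{g'}(0,y)<0$ for cone points outside $\Lambda_r$.
\item For cone points $y\in\Lambda_r$, every path from $y$ to infinity pays at least $H$ before leaving the box. Type~2 pays at most $|\pi|\delta$ to reach $\partial\Lambda_r$ and then shares the exterior routes. Hence $B'_{g'}(0,y)\le |\pi|\delta+ 2r\max_{\partial\Lambda_r}(\text{exterior}) - H<0$ for $H$ large.
\end{enumerate}
By the observation after Proposition~\ref{prop:geodesic}, type~2 then survives along $g'$. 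Since the modified event has positive probability, $\PP(G_{\theta,\alpha})>0$.

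The delicate point is the order of quantifiers in the last steps. The rate $R$ depends on the whole configuration, including the weights inside the box. I would first pick $H$, then pick $r$ via a tightness statement for $R$ that is uniform over the finitely many bounded local modifications. This uniformity comes from the $K(r,H)$ comparison together with the strengthened shape theorem~\eqref{eq:shapethm}.

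\emph{Non-survival for $\alpha>\pi/2$.} By Proposition~\ref{prop:geodesic}, if type~2 survives it infects all of some $g\in\pazocal{T}_0$. By the shape theorem, $g$ eventually lies in the complementary wedge $W$ of half-angle $\pi-\alpha<\pi/2$ around $\theta$. The two boundary rays $b_1,b_2$ of the cone lie in $W$'s closure and satisfy $\rho(b_i)>0$.

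Now use the geodesic $g_\theta=(w_1,w_2,\dots)$ from the hypothesis. For $y=tb_i$ with $t$ large we have $B_{g_\theta}(0,y)\approx t\rho(b_i)>0$. So type~1 wins all far vertices of $g_\theta$, and by coalescence of geodesics toward $g_\theta$ it occupies an infinite connected path $P$ from the cone to infinity inside $W$. This splits $W$ into two sub-wedges, and by planarity $g$ must lie in one of them, say the one bounded by $b_1$ and $P$.

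I would then obtain a contradiction by comparing passage times linearly. Points $y=tb_2$ on the far side have Busemann value, relative to geodesics in that sub-wedge, controlled through crossing $P$ and the cocycle property, giving $T(y,v)<T(0,v)$ for far $v\in g$. I expect this last step to be the main obstacle: without differentiability of $\mathcal{A}$ the directional-derivative comparison fails (consistent with the polygonal-shape counterexamples), so the linearity of $B_{g_\theta}$ must be used essentially. If that route stalls, I would instead try to show directly that any $g\in\pazocal{T}_0$ surviving in $W$ must be sandwiched by, and hence coalesce with, Busemann-linear geodesics for which $B(0,tb_i)>0$.
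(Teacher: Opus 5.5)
\textbf{Survival ($\alpha<\pi/2$).} Your local modification does not close, for two reasons.

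\emph{Support of the weights.} The modification needs weights in $[0,\delta]$ and in $[H,H+1]$, with $H$ arbitrarily large, to have positive probability. The theorem only assumes a continuous law satisfying~\eqref{pt_assumption}, and its support may be bounded and bounded away from $0$. This is exactly why the paper works with just two levels $a<b$ of the support.

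\emph{Order of quantifiers.} More seriously, the choices are circular. Step~5 needs $H$ to exceed the cost of going around $\partial\Lambda_r$, which is of order $r$, so $H$ must be chosen after $r$. Step~4 needs the linearity rate of the new geodesic $g'$ of $\omega'$ to be at most $r$, but $\omega'$ has been altered on all of $\Lambda_r$.
\begin{itemize}
\item The perturbation $K(r,H)$ is itself of order $r$: already $T(0,\partial\Lambda_r)$ drops from about $r$ times the time constant to $|\pi|\delta$. So $\tfrac c4 r>K(r,H)$ cannot be forced by enlarging $r$, especially as $c=|u|\cos\alpha$ may be tiny.
\item Comparing $B'_{g'}$ with $B_g$ through $K$ is meaningless, since $g'\neq g$ in general.
\item No tightness of the rate holds uniformly over modifications of growing boxes. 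The strengthened shape theorem~\eqref{eq:shapethm} controls $T$, not Busemann functions, which are differences of passage times to points at distance tending to infinity.
\end{itemize}

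\emph{How the paper avoids this.} It uses translation. It first fixes $k$ so that $\pazocal{E}_k(\alpha',N)$ has probability at least $4/5$, uniformly in $N$. This event says the Busemann function of the geodesic from $N\mathbf e_2$ is negative on $R_k(\alpha',N)$, for a wider angle $\alpha'>\alpha$. Only afterwards does it modify a region $D$ below $N'\mathbf e_2$: it lowers the axis edges below $a$, and the other edges of $D$ only go up, to at least $b'\ge b$. This gives type~2 a head start $N'\Delta>K$ that grows with $N$ while $k$ and $K$ stay fixed.

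\textbf{Non-survival ($\alpha>\pi/2$).} You already have the decisive input: $\rho(b_1),\rho(b_2)>0$. So type~1 started far out on \emph{either} boundary ray beats type~2 to the tail of the same $g_\theta$. You then leave the conclusion open. Your proposed continuation, via cocycle comparisons across $P$ for an arbitrary surviving $g$, or via sandwiching and coalescence, is both unsubstantiated and unnecessary.

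The missing step is to use both sides simultaneously:
\begin{itemize}
\item Let $M':=\max\{B_{g_\theta}(0,x): x\text{ within distance }1\text{ of }L_-(\theta)\}$, which is finite by linearity.
\item Pick $u'$ far out near $b_1$ and $u''$ far out near $b_2$ with $B_{g_\theta}(0,\cdot)>0$, and with $T(x,u'),T(x,u'')>M'$ for all such $x$.
\item $T(0,v_j)-T(x,v_j)$ is nondecreasing along $g_\theta$. Hence the geodesics $\gamma',\gamma''$ from $u',u''$ to points $z',z''\in g_\theta$ with $T(u',z')<T(0,z')$ and $T(u'',z'')<T(0,z'')$ cannot come within distance $1$ of $L_-(\theta)$.
\end{itemize}
This non-crossing is precisely what your planarity claim, that $P$ lies in $W$ and splits it, is missing. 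The cone, $\gamma'$, $\gamma''$ and the tail of $g_\theta$ are then all type~1 and enclose the origin. So type~2 takes only finitely many vertices of each geodesic in $\pazocal{T}_0$, and Proposition~\ref{prop:geodesic} gives non-survival.
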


Under the widely believed conjecture that the asymptotic shape is differentiable it would follow that the critical angle equals $\pi/2$ for all directions $\theta$. However, it is also known that there exist stationary and ergodic models of first-passage percolation for which the shape is not differentiable; see~\cite{AB18,BH21,HM95}.
In the model of~\cite{AB18}, the asymptotic shape is an octagon, and although this model does not fit into our setting, our argument can be adapted to show that there exists a critical angle $\alpha_c=\alpha_c(\theta)$ for coexistence which in this case satisfies $\alpha_c<\pi/2$ for all diagonal directions, i.e.\ $\theta\in\{\pi/4,3\pi/4,5\pi/4,7\pi/4\}$.
In the model of~\cite{BH21}, the asymptotic shape is a diamond, and the critical angle $\alpha_c=\alpha_c(\theta)$ equals $\pi/4$ in the case that $\theta$ corresponds to either of the coordinate directions, i.e.\ $\theta\in\{0,\pi/2,\pi,3\pi/2\}$.

We also remark that, due to lattice symmetry and convexity of the shape, whenever the critical angle exists, it satisfies $\pi/4\le\alpha_c(\theta)\le\pi/2$ regardless of the direction $\theta$ for models that satisfy the conditions of Damron and Hanson~\cite{DH14}. This is because there exist at least four tangent directions of the shape, and hence at least four directions with linear Busemann function, separated by an angle $\pi/2$. That means that the largest directional gap without a linear Busemann function is $\pi/2$. Consequently, regardless of the direction $\theta$, and without non-verified assumptions on the geodesic structure, one can show that survival is always possible for $\alpha<\pi/4$. We leave it to the reader to fill in the details of this argument.

Heuristically, Theorem~\ref{thm2} can be explained as follows. Suppose, for simplicity, that $\theta=\pi/2$, so that the linear functional $\rho$, to which $B_{g}$ is asymptotically linear, is zero on the horizontal axis. If $\alpha<\pi/2$, then $\rho(x)<0$ for all points in the cone $\mathcal{C}_{\alpha}^\theta$ excluding the origin, meaning that the points are (on average) asymptotically further away than the origin from $g$, so that type~2 at the origin should have a chance to reach vertices along $g$ before type~1. If, on the other hand, $\alpha>\pi/2$ then there are points in the right half-plane contained inside $\mathcal{C}_{\alpha}^\theta$ for which $\rho(x)>0$, that are hence asymptotically closer (on average) than the origin to far out vertices along $g$. Similarly, there are points in the left half-plane contained in $\mathcal{C}_\alpha^\theta$ that also should beat type~2 to far out vertices along $g$. Hence, if $\alpha>\pi/2$, then the type~2 infection at the origin ought to become trapped by type~1 infections surrounding it from both sides.

We split the proof in two subsections. First we show that $\PP(G_{\theta,\alpha})=0$ for $\alpha>\pi/2$, and then that $\PP(G_{\theta,\alpha})>0$ for $\alpha<\pi/2$. For the former part, it is straightforward to turn the outlined heuristic into a proof. For the latter, we will have to perform a resampling argument that will require some work.

\subsection{Non-survival above the critical angle}\label{sec:super}

Fix $\theta\in[0,2\pi)$. The straight line through the origin with direction $\theta$ divides the plane into two half-planes that we denote as follows:
\begin{align*}
\mathbb{H}_+(\theta)&:=\big\{x\in\R^2:x\cdot(\sin\theta,-\cos\theta)\ge0\big\};\\
\mathbb{H}_-(\theta)&:=\big\{x\in\R^2:x\cdot(\sin\theta,-\cos\theta)\le0\big\}.
\end{align*}
In addition, let $L_-(\theta):=\{r(\cos\theta,\sin\theta):r\le0\}$ denote the negative half-line in direction $\theta$, and note that the cone $\pazocal{C}_\alpha^\theta$ is centred around this half-line.

Suppose that, almost surely, there exists a geodesic $g\in\pazocal{T}_0$ whose Busemann function is asymptotically linear with direction $\theta$. Given $g$, we partition the plane into two parts according to its Busemann function. In particular, we let
$$
Z(g):=\big\{y\in\Z^2:B_g(0,y)>0\big\}
$$
denote the set consisting of all vertices that lie `asymptotically closer' to far out vertices along the geodesic $g$ in comparison to the origin. (It is a priori not clear whether $B_g(0,y)=0$ can occur with positive probability, but that shall not be of any concern for our argument.)

Fix $\alpha>\pi/2$ and set $\alpha':=(\alpha+\pi/2)/2$. Let $A_+$ denote the event that the intersection of the sets $Z(g)$ and $(\pazocal{C}_\alpha^\theta\setminus\pazocal{C}_{\alpha'}^\theta)\cap \mathbb{H}_+(\theta)$ is infinite. By assumption, the Busemann function of $g$ is asymptotically linear with direction $\theta$, almost surely, implying that $\PP(A_+)=1$.

Next, given $\vep>0$, let $A'$ denote the event that $T(x,y)>\vep|y|$ for all $x$ within distance 1 of the half-line $L_-(\theta)$ and all but finitely many $y$ in $\pazocal{C}_\alpha^\theta\setminus\pazocal{C}_{\alpha'}^\theta$.
By the strengthening of the shape theorem, in~\eqref{eq:shapethm}, we may pick $\vep>0$ such that $\PP(A')=1$.
Finally, since, almost surely, the Busemann function of $g$ is asymptotically linear with direction $\theta$, it follows that
$$
M':=\max\big\{B_{g}(0,y):y\in\Z^2\text{ at distance at most 1 from }L_-(\theta)\big\}
$$
is almost surely finite. Set $A'':=\{M'<\infty\}$.

Now, on the event $A_+\cap A'\cap A''$, which occurs with probability 1, we may find $u'$ in the region $(\pazocal{C}_\alpha^\theta\setminus\pazocal{C}_{\alpha'}^\theta)\cap \mathbb{H}_+(\theta)$ such that $B_{g}(0,u')>0$ and $T(x,u')>M'$ for all $x$ at distance at most 1 to $L_-(\theta)$. Consequently, there exists $z'\in g$ such that $T(u',z')<T(0,z')$, and the path $\gamma'$ connecting $u'$ to $z'$ satisfying $T(\gamma')=T(u',z')$ does not cross $L_-(\theta)$; see Figure~\ref{fig:supercritical}. 
\begin{figure}[htbp]
\begin{center}
\begin{tikzpicture}[scale=.6, rotate=-15,
       % ->,    > = stealth',
       %shorten > = 1pt,auto,
       decoration = {snake, pre length=3pt,post length=4pt}]
% axes
\draw (-4,0) -- (4,0); % x axis
\draw (0,-4) -- (0,4); % y axis
% dashed lines
\draw[dashed] (-4.9,0) -- (-4.1,0);
\draw[dashed] (4.3,0) -- (4.9,0);
\draw[dashed] (0, 4.3) -- (0, 4.9);
\draw[draw = blue, dashed] (0, -4.3) -- (0, -4.9);

% geodesics
\path[draw=red, decorate] (0,0) --  (1,2);
%\path[draw=red, decorate] (0,0) --  (-1,2);
\draw (3.2, 5.2) node {\scriptsize{$g$}};
%\draw (-2.25, 3.2) node {\scriptsize{$g_2$}};
\path[draw = purple, decorate] (1,2) --  (2,4);
%\path[draw = blue, ->, decorate] (-1,2) --  (-2,4);
\path[draw = blue, ->, decorate] (2,4) --  (3,6);

% rho
%\draw (-4,2) -- (4,-2);
%\draw (-4,-2) -- (4,2);
\draw (3.7, -.5) node {\scriptsize{$\rho(x)=0$}};
%\draw (-3.7, -2.5) node {\scriptsize{$\rho_2(x)=0$}};

%cone
\draw[draw = blue, thick] (-4,1) -- (0,0) -- (4,1);
\draw[draw = blue, dashed, thick] (-4,1) -- (-4.8,1.2);
\draw[draw = blue, dashed, thick] (4,1) -- (4.8,1.2);
\textcolor{blue}{\draw (2, 0.5) node {\scriptsize{$\bullet$}};}
\textcolor{blue}{\draw (-3, 0.75) node {\scriptsize{$\bullet$}};}
\draw (2.6, 0.35) node {\scriptsize{$u'$}};
\draw (-3.6, 0.5) node {\scriptsize{$u''$}};

% paths
\textcolor{blue}{\draw (1, 2) node {\scriptsize{$\bullet$}};}
\textcolor{blue}{\draw (2, 4) node {\scriptsize{$\bullet$}};}
\draw (1.7, 2) node {\scriptsize{$z'$}};
\draw (2.7, 4) node {\scriptsize{$z''$}};
\path[draw = blue, decorate] (2,0.5) --  (1,2);
\path[draw = blue, decorate] (-3,0.75) --  (2,4);

% sites
\textcolor{red}{\draw (0,0) node {$\bullet$};}

% angle
\draw [domain=-86:10] plot ({1.4*cos(\x)}, {1.4*sin(\x)});
\draw (1,-1.6) node {\scriptsize{$\alpha$}};
%\draw [domain=-87:-17] plot ({2.6*cos(\x)}, {2.6*sin(\x)});
%\draw (1.5,-2.5) node {\scriptsize{$\alpha$}};

\end{tikzpicture}
\end{center}
\vspace{-0.4cm}
\caption{\small The supercritical case with $\alpha>\pi/2$.}
\label{fig:supercritical}
\end{figure}
It follows that in the competition process where the origin is initially infected by type~2 and remaining vertices in $\pazocal{C}_\alpha^\theta$ are infected by type~1, type~1 will infect the vertex $z'$ as well as all subsequent vertices along $g$ and $\gamma'$. Since the path $\gamma'$ does not intersect $L_-(\theta)$ it effectively blocks type~2 from infecting any vertex in $\mathbb{H}_+(\theta)\setminus\pazocal{C}_\alpha^\theta$ that lies clockwise of the concatenation of $\gamma'$ and the segment of $g$ starting at $z'$.

An analogous argument shows that we may, with probability 1, find $u''$ in $(\pazocal{C}_\alpha^\theta\setminus\pazocal{C}_{\alpha''}^\theta)\cap \mathbb{H}_-(\theta)$ and $z''\in g$ such that $T(u'',z'')<T(0,z'')$, and the path $\gamma''$ connecting $u''$ to $z''$ satisfying $T(\gamma'')=T(u'',z'')$ does not cross $L_-(\theta)$; again see Figure~\ref{fig:supercritical}. In the competition process where the origin is initially infected by type~2 and remaining vertices in $\pazocal{C}_\alpha^\theta$ are infected by type~1, we conclude again that type~1 will infect the vertex $z''$ as well as all subsequent vertices along $g$ and $\gamma''$. Since the path $\gamma''$ does not intersect $L_-(\theta)$ it effectively blocks type~2 from infecting any vertex in $\mathbb{H}_-(\theta)\setminus\pazocal{C}_\alpha^\theta$ that lies counterclockwise of the concatenation of $\gamma''$ and the segment of $g$ starting at $z''$.

In conclusion, if $\alpha>\pi/2$, then, almost surely, type~2 will be first to at most finitely many vertices along $g$, as well as any other geodesic. This means that, almost surely, for every infinite geodesic starting at the origin, type~2 will infect at most finitely many vertices. By Proposition~\ref{prop:geodesic}, this means that type~2 will infect at most finitely many vertices altogether, and hence will not survive, almost surely. This finishes the proof of the non-survival part of Theorem~\ref{thm2}.

\subsection{Survival below the critical angle} \label{sec:sub}

It remains to prove the part of Theorem~\ref{thm2} that states that survival is possible for $\alpha<\pi/2$. By lattice symmetry, we may assume that $\theta\in[\pi/4,\pi/2]$.
Assume that with probability one there exists a geodesic $g$ in $\pazocal{T}_0$ with Busemann function asymptotically linear with direction $\theta$. 
For $k \ge1$, denote by $\pazocal{E}_k=\pazocal{E}_k(\alpha)$ the event that the Busemann function $B_{g}$ is negative on the cone $\pazocal{C}_\alpha^\theta$ below height level $-k$. More precisely, denote by
$R_k=R_k(\alpha)$ the region obtained by subtracting from the cone $\pazocal{C}_\alpha^\theta$ with infinite height the cone with the same apex and angle with finite height $k$,  depicted in Figure~\ref{fig:subcritical} (left), and set
\begin{equation}\label{eq:E_k}
\pazocal{E}_k(\alpha) := \Big\{ \sup_{x \in R_k} B_{g}(0,x) < 0 \Big\}.
\end{equation}
For $\alpha<\pi/2$, the zero set of the linear functional $\rho$, to which $B_g$ is asymptotically linear, intersects $\pazocal{C}_\alpha^\theta$. Since the Busemann function of $g$ is asymptotically linear to $\rho$, almost surely, we may for every $\alpha<\pi/2$ and $\vep>0$ find $K<\infty$ such that $\PP(\pazocal{E}_k(\alpha)) \geq 1- \vep$ for all $k\ge K$.

Note that on the event $\pazocal{E}_k(\alpha)$ we have for every $z\in g$ that $T(0,z)<\inf_{x\in R_k}T(x,z)$, and hence that type~2 would conquer all vertices along $g$ in the competition problem where type~2 starts at the origin and type~1 from the vertices in $R_k$. That is, type~2 would be able to escape to infinity along $g$ when competing against type~1 with initial set restricted to $R_k$.

\begin{figure}[htbp]
\begin{center}
\begin{tikzpicture}[scale=.6, rotate=-15,
       % ->,    > = stealth',
       %shorten > = 1pt,auto,
       decoration = {snake, pre length=3pt,post length=4pt}]
% axes
\draw (-4,0) -- (4,0); % x axis
\draw (0,-4) -- (0,4); % y axis
% dashed lines
\draw[dashed] (-4.9,0) -- (-4.1,0);
\draw[dashed] (4.3,0) -- (4.9,0);
\draw[dashed] (0, 4.3) -- (0, 4.9);
\draw[draw = blue, dashed] (0, -4.3) -- (0, -4.9);

% geodesics
\path[draw, ->, decorate] (0,0) --  (2,4);
%\path[draw, ->, decorate] (0,0) --  (-2,4);
\draw (2.2, 3.2) node {\scriptsize{$g$}};
%\draw (-2.25, 3.2) node {\scriptsize{$g_2$}};

% rho
%\draw (-4,2) -- (4,-2);
%\draw (-4,-2) -- (4,2);
\draw (4, -.5) node {\scriptsize{$\rho(x)=0$}};
%\draw (-4, -2.5) node {\scriptsize{$\rho_2(x)=0$}};

%cone
\draw[draw = blue, thick] (-3,-3) -- (0,0) -- (3,-3);
\draw[draw = blue, dashed, thick] (-3,-3) -- (-3.8,-3.8);
\draw[draw = blue, dashed, thick] (3,-3) -- (3.8,-3.8);
\draw[draw = blue] (-1.8,-1.8) -- (1.8,-1.8);
\draw (-0.8, -1.5) node {\scriptsize{$-k$}};
\draw (-1.3, -2.8) node {\scriptsize{$R_k$}};
\fill[blue,nearly transparent] (-3.5, -3.5) -- (-1.8,-1.8) -- (1.8,-1.8) -- (3.5,-3.5) -- cycle;

% sites
\textcolor{red}{\draw (0,0) node {$\bullet$};}

% angle
\draw [domain=-86:-49] plot ({1.3*cos(\x)}, {1.3*sin(\x)});
\draw (0.4,-0.9) node {\scriptsize{$\alpha$}};

%\draw [domain=-88:-2] plot ({3*cos(\x)}, {3*sin(\x)});
%\draw (1.1,-2.4) node {\scriptsize{$\pi/2$}};

\end{tikzpicture}
\quad\quad
\begin{tikzpicture}[scale=.6, rotate=-15,
       % ->,    > = stealth',
       %shorten > = 1pt,auto,
       decoration = {snake, pre length=3pt,post length=4pt}]
% axes
\draw (-4,0) -- (4,0); % x axis
\draw (0,-4) -- (0,4); % y axis
% dashed lines
\draw[dashed] (-4.9,0) -- (-4.1,0);
\draw[dashed] (4.3,0) -- (4.9,0);
\draw[dashed] (0, 4.3) -- (0, 4.9);
\draw[draw = blue, dashed] (0, -4.3) -- (0, -4.9);

% geodesics
\path[draw, ->, decorate] (0,0) --  (2,4);
%\path[draw, ->, decorate] (0,0) --  (-2,4);
\draw (2.2, 3.2) node {\scriptsize{$g$}};
%\draw (-2.25, 3.2) node {\scriptsize{$g_2$}};

% rho
%\draw (-4,2) -- (4,-2);
%\draw (-4,-2) -- (4,2);
\draw (4, -.5) node {\scriptsize{$\rho(x)=0$}};
%\draw (-4.1, -2.5) node {\scriptsize{$\rho_2(x)=0$}};

%cone
\draw[draw = blue, thick] (-3,-3) -- (0,0) -- (3,-3);
\draw[draw = blue, dashed, thick] (-3,-3) -- (-3.8,-3.8);
\draw[draw = blue, dashed, thick] (3,-3) -- (3.8,-3.8);
\draw[draw = blue] (-1.8,-1.8) -- (1.8,-1.8);
\draw (-0.8, -2.2) node {\scriptsize{$-k$}};
\draw (-0.5, -1.2) node {\scriptsize{$T_k$}};
\fill[blue,nearly transparent] (-1.8,-1.8) -- (0,0) -- (1.8,-1.8) -- cycle;

% sites
\textcolor{red}{\draw (0,0) node {$\bullet$};}

% angle
\draw [domain=-86:-49] plot ({1.3*cos(\x)}, {1.3*sin(\x)});
\draw (0.4,-0.9) node {\scriptsize{$\alpha$}};

%\draw [domain=-88:-29] plot ({3*cos(\x)}, {3*sin(\x)});
%\draw (1.1,-2.4) node {\scriptsize{$\alpha_1$}};

\end{tikzpicture}
\end{center}
\vspace{-0.4cm}
\caption{\small The subcritical case $\alpha < \pi/2$, with the regions $R_k$ (left) and $T_k$ (right) coloured in blue.}
\label{fig:subcritical}
\end{figure}

Next, let $T_k=T_k(\alpha)$ be the triangular region obtained by subtracting $R_k$ from $\pazocal{C}_\alpha^\theta$; see Figure~\ref{fig:subcritical} (right).
%For general $\theta$, again we tilt the cone, including the region $T_k$, to point in direction $\theta$.
We need to show that type~2 has positive probability of escaping to infinity along $g$ when competing simultaneously against type~1 starting from sites in $R_k$ and $T_k$. In the case that the passage time distribution has unbounded support, this can be done by a rather straightforward local modification argument, in which the passage times of edges connecting vertices in $T_k\setminus\{0\}$ with vertices in its complement are increased substantially, impeding type~1 from advancing. In order to avoid imposing a restriction on the support of the distribution we shall require a somewhat more elaborate modification argument.

We now fix $\alpha<\pi/2$, and fix $a<b$ such that $\PP(\omega_e<a)>0$ and $\PP(\omega_e>b)>0$. Set $\Delta=(b-a)/2$. Pick $\beta\in(\pi/6,\pi/4)$ such that $b\tan(\beta)>a+\Delta$, and $\alpha'\in(\alpha,\pi/2)$ such that $\alpha'>\pi/4+\beta$. 
We also fix $c\ge1$ such that
\begin{equation}\label{eq:c_pick}
\frac{\mu({\bf e}_1)}{12}\Big[1+3c\frac{\tan(\alpha'-\alpha)}{\cos(\alpha)}\Big]>a+\Delta.
\end{equation}

For $n\ge0$, we shall below write $\pazocal{C}^\theta_{\alpha'}(n)$ for the cone $\pazocal{C}^\theta_{\alpha'}$ shifted along the vector $n{\bf e}_2$, so that its apex lies at $n{\bf e}_2$, and write $T_k(\alpha',n)$ for the region $T_k(\alpha')$ shifted along the vector $n{\bf e}_2$; see Figure~\ref{fig:coneinsidecone}.
We also write $\pazocal{E}_k(\alpha',n)$ for the translate of the event $\pazocal{E}_k(\alpha')$ along the vector $n{\bf e}_2$, and fix $k$ so that $\PP(\pazocal{E}_k(\alpha',n))\ge4/5$. Given $n$ and $k$ we let $n'$ denote the maximal integer so that $n'{\bf e}_2$ does not belong to the interior of $T_k(\alpha',n)$, and let $\gamma_n$ denote the straight path connecting $n'{\bf e}_2$ and $n{\bf e}_2$, and pick $K$ so that $\PP(T(\gamma_n)\le K)\ge4/5$.
%$$
%\PP\big(\pazocal{E}_k(\alpha',n)\cap\{T(\gamma_n)\le K\}\big)\ge3/4.
%$$
For $\alpha$, $\alpha'$ and $k$ fixed as above, let
$$
\pazocal{A}_n:=\big\{T(u,v)>\mu(v-u)/2\text{ for all }u\in\pazocal{C}^\theta_\alpha,v\in\partial\pazocal{C}^\theta_{\alpha'}(n')\big\}.
$$
Since $\alpha<\alpha'$ we may, due to the strengthening of the shape theorem in~\eqref{eq:shapethm}, fix $N$ large so that $N'\Delta>K$ and $\PP(\pazocal{A}_N)\ge4/5$, which hence gives that
\begin{equation}\label{eq:good_event}
\PP\big(\pazocal{E}_k(\alpha',N)\cap\{T(\gamma_N)\le K\}\cap\pazocal{A}_N\big)\ge2/5.
\end{equation}

For $c$, $k$ and $N'$ specified as above, let $D:=T_{(1+2c)N'}(\alpha',N')$; see Figure~\ref{fig:coneinsidecone}. Our goal for the remainder of the proof will be to increase the weight along edges in $D$, except for those connecting the origin to $N'{\bf e}_2$, which will allow for type~2, started at the origin, to reach $N'{\bf e}_2$ with $K$ time units to spare before type~1, started in the remainder of $\pazocal{C}^\theta_\alpha$, reaches a boundary point of $\pazocal{C}^\theta_{\alpha'}(N')$. On this event type~2 will succeed in escaping to infinity along a geodesic starting at $N{\bf e}_2$.
(For simplicity we have let the region $D$ intersect $\pazocal{C}_\alpha^\theta$. Note, however, that modifying edges between vertices in $\pazocal{C}_\alpha^\theta$ has no effect on the competition between the two types.)

\begin{figure}[htbp]
\begin{center}
\begin{tikzpicture}[scale=.5, 
       % ->,    > = stealth',
       %shorten > = 1pt,auto,
       decoration = {snake, pre length=3pt,post length=4pt}]
      
% axes
\draw (-11,0) -- (7,0); % x axis
\draw (0,-5) -- (0,8.5); % y axis
\draw[dashed] (-11.7,0) -- (-11.1,0);
\draw[dashed] (7.2,0) -- (7.8,0);
\draw[dashed] (0, 8.6) -- (0, 9.2);
\draw[dashed] (0, -5.3) -- (0, -5.9);

% geodesic
\path[draw, ->, decorate] (0,5) --  (2,9);

% cone above
\begin{scope}[rotate around={-15:(0,5)}]
\draw[draw = black] (-9,-3) -- (0,5) -- (9,-3);
\draw[draw = black, dashed, ] (-9.9,-3.8) -- (-9,-3);
\draw[draw = black, dashed, ] (9.9,-3.8) -- (9,-3);
\fill[orange,nearly transparent] (-1.13, 4) -- (1.13,4) -- (0,5) -- cycle;
\draw[draw=black] (-1.13,4) -- (1.13,4);
\end{scope}

% cone middle
\begin{scope}[rotate around={-15:(0,4)}]
\draw (0,-5) -- (0,5);
\draw[draw = black] (-8,-3) -- (0,4) -- (8,-3);
\draw[draw = black, dashed, ] (-8.9,-3.8) -- (-8,-3);
\draw[draw = black, dashed, ] (8.9,-3.8) -- (8,-3);
\draw[domain=-135:-94] plot ({1.4*cos(\x)}, {1.4*sin(\x) +4});
\draw (-0.4,3.1) node {\scriptsize{$\alpha'$}};
\draw[domain=-72:-45] plot ({1.4*cos(\x)}, {1.4*sin(\x) +4});
\draw (0.55,3.1) node {\scriptsize{$\delta$}};
\draw[draw = black] (-7.2, -2.3) -- (-1.4,-2.3);
\draw[draw=black] (3.5,-2.3) -- (7.2, -2.3);
%\draw[draw = black] (8.2, -2.3) -- (2.3, -2.3);
%\draw (-1.5, -2.275) node {\scriptsize{$-2cN$}};
\fill[YellowGreen,nearly transparent] (-7.2, -2.3) -- (-1.4,-2.3) -- (1,0.15) -- (3.5,-2.3) -- (7.2, -2.3) -- (0,4) -- cycle;
%\draw (-2.6, 0.8) node {\scriptsize{$D$}};
%\fill[YellowGreen,nearly transparent] (8.2, -2.3) -- (2.7,2.6) -- (0,2.6) -- (0,0) --(2.3, -2.3) -- cycle;
\draw (-2.6, 0) node {\scriptsize{$D$}};
%\draw[draw = ForestGreen, thick] (0,0) -- (0,2.6);
%\draw (0.6, 1.5) node {\scriptsize{$\gamma'$}};
\end{scope}

%cone below
\begin{scope}[rotate=-15]
%\fill[white] (-3.5, -3.5) -- (0,0) -- (3.5, -3.5) -- cycle;
\draw (0,-5) -- (0,5);
\draw[draw = blue, thick] (-3,-3) -- (0,0) -- (3,-3);
\draw[draw = blue, dashed, thick] (-3,-3) -- (-3.8,-3.8);
\draw[draw = blue, dashed, thick] (3,-3) -- (3.8,-3.8);
\fill[blue, nearly transparent] (-3.5, -3.5) -- (0,0)-- (3.5,-3.5) -- cycle;

% angle below
\draw [domain=-131:-94] plot ({1.3*cos(\x)}, {1.3*sin(\x)});
\draw (-0.4,-0.9) node {\scriptsize{$\alpha$}};
\end{scope}

\draw [domain=4:71] plot ({1*cos(\x)}, {1*sin(\x)});
\draw (0.5,0.4) node {\scriptsize{$\theta$}};

% labels
\draw (1.8, 5) node {\scriptsize{$N$}};
\draw (1.8, 4) node {\scriptsize{$N'$}};
\draw (-8.5, 1.6) node {\scriptsize{$\pazocal{C}_{\alpha'}^\theta(N)$}};
\draw (-8.5, -1.2) node {\scriptsize{$\pazocal{C}_{\alpha'}^\theta(N')$}};
\draw (-3.5, -3.5) node {\scriptsize{$\pazocal{C}_{\alpha}^\theta$}};
\draw (1.6,-.4) node {\scriptsize{$x$}};

% sites
\textcolor{red}{\draw (0,0) node {$\bullet$};}
\textcolor{black}{\draw (0,5) node {$\bullet$};}
\textcolor{black}{\draw (0,4) node {$\bullet$};}

\end{tikzpicture}
\end{center}
\vspace{-0.4cm}
\caption{\small The positioning of the three cones $\pazocal{C}_{\alpha}^\theta$, $\pazocal{C}_{\alpha'}^\theta(N')$ and $\pazocal{C}_{\alpha'}(N)$. The three dots indicate the origin, and the points $N'{\bf e}_2$ and $N{\bf e}_2$. The three shaded areas indicate the starting configuration of type~1, and the regions $D$ and $T_k(\alpha',N)$. }
\label{fig:coneinsidecone}
\end{figure}

We proceed to the modification argument. First we pick $b'\ge b$ such that $\PP(\omega_e>b')>0$ and
\begin{equation}\label{eq:good_weights}
\PP\big(\omega_e\le b'\text{ for all }e\in D\big)\ge4/5,
\end{equation}
which is possible since the weight distribution is assumed to be continuous. 
Let $\omega'$ be an independent copy of $\omega$, and construct a third configuration $\tilde\omega$ by identifying $\tilde\omega$ with $\omega'$ on $D$ and identifying $\tilde\omega$ with $\omega$ elsewhere. Let $\pazocal{G}$ denote the intersection of the events in~\eqref{eq:good_event} and~\eqref{eq:good_weights}, which hence occurs with probability at least $1/5$.
Let $\pazocal{G}'$ denote the event that $\omega'_e\le a$ for the edges along the vertical axis connecting the origin to $N'{\bf e}_2$, and that $\omega'_e\ge b'$ for all remaining edges in $D$. By the choice of $a$ and $b'$ we have $\PP(\pazocal{G}')>0$, and by the independence of $\omega$ and $\omega'$ we have
$$
\PP(\pazocal{G}\cap\pazocal{G}')=\PP(\pazocal{G})\PP(\pazocal{G}')>0.
$$

In the remainder of the proof we consider an outcome $(\omega,\omega')$ in the event $\pazocal{G}\cap\pazocal{G}'$, and write $\tilde T$ for passage times in the resulting weight configuration $\tilde\omega$. Let $\sigma_\ell$ denote the straight path connecting the origin and $\ell{\bf e}_2$. For every $\ell=1,2,\ldots,N'$ we have, on the event $\pazocal{G}\cap\pazocal{G}'$, that $\tilde T(\sigma_\ell)\le a\ell$ in the configuration $\tilde\omega$. Now consider a path $\gamma$, edge-disjoint from $\sigma_\ell$, connecting some point in $\pazocal{C}_\alpha^\theta$ to $\ell{\bf e}_2$, for some $\ell=1,2,\ldots,N'$. Either $\gamma$ exits $\pazocal{C}_{\alpha'}^\theta(N')$ or it does not. If $\gamma$ does not exit $\pazocal{C}_{\alpha'}^\theta(N')$, then it necessarily includes at least $\ell$ edges which in $\tilde\omega$ has weight $b$ or more. That is, for each $\ell=1,2,\ldots,N'$, and any path $\gamma$ disjoint from $\sigma_\ell$ that does not exit $\pazocal{C}_{\alpha'}^\theta(N')$, we have in $\tilde\omega$ that
\begin{equation}\label{eq:gamma1}
\tilde T(\gamma)\ge b\ell>a\ell\ge \tilde T(\sigma_\ell).
\end{equation}
In conclusion, for every $\ell=1,2,\ldots,N'$, type~1 cannot beat type~2 to $\ell{\bf e}_2$ staying within the cone $\pazocal{C}_{\alpha'}^\theta(N')$.

We consider next a path $\gamma$ connecting a point $u\in\pazocal{C}_\alpha^\theta$ to the exterior of $\pazocal{C}_{\alpha'}^\theta(N')$. Since $\alpha<\alpha'$, the point in $\pazocal{C}_\alpha^\theta$ at closest distance to the exterior of $\pazocal{C}_{\alpha'}^\theta(N')$ is the origin. Since $\theta\in[\pi/4,\pi/2]$ and $\alpha'\in(\pi/4,\pi/2)$, the lattice point outside $\pazocal{C}_{\alpha'}^\theta(N')$ at closest $\ell_1$-distance to the origin is either the point $N'{\bf e}_2$ itself, or a point lying on the positive half of the horizontal axis, depending on whether the angle $\delta:=\alpha'-\pi/2+\theta$ (depicted in Figure~\ref{fig:coneinsidecone}) exceeds $\pi/4$ or is less than $\pi/4$, respectively. In the former case the path has length $N'$, and in the latter case it has length (denoted by $x$ in Figure~\ref{fig:coneinsidecone}) at least
$$
N'\tan(\delta)=N'\tan(\alpha'-\pi/2+\theta)\ge N'\tan(\alpha'-\pi/4)\ge N'\tan(\beta),
$$
where we have used that $\pi/2-\theta\le\pi/4$ and $\alpha'-\pi/4\ge\beta$. If the starting point $u$ of $\gamma$ is contained in $T_{cN'}(\alpha)$, then $\gamma$ will again have to pick up at least $N'\tan\beta$ edges which in $\tilde\omega$ has weight at least $b$, so that
\begin{equation}\label{eq:gamma2}
\tilde T(\gamma)\ge N'b\tan(\beta)>N'(a+\Delta)>N'a+K,
\end{equation}
where we have used that $b\tan\beta>\alpha+\Delta$ and that $N'\Delta> K$.
In particular, along no such path is type~1 able to reach $\ell{\bf e}_2$ in time $a\ell$ or less, for $\ell=1,2,\ldots,N'$.

Suppose next that the starting point of $\gamma$ lies outside $T_{cN'}(\alpha)$. (We may without loss of generality assume that the starting point is an (internal) boundary point of $\pazocal{C}_\alpha^\theta$.) If the angle $\delta$ exceeds $\pi/4$, then the Euclidean distance from the origin to the exterior of $\pazocal{C}_{\alpha'}^\theta(N')$ is at least $N'/\sqrt{2}$. If $\delta<\pi/4$, then the Euclidean distance from the origin to the exterior of $\pazocal{C}_{\alpha'}^\theta(N')$ is at least
$$
x\sin(\pi/2-\delta)\ge x\sin(\pi/4)\ge \frac{N'\tan(\beta)}{\sqrt{2}}> \frac{N'}{3},
$$
where we have used that $\beta>\pi/6$. A similar calculation shows that a point of $\pazocal{C}_\alpha^\theta$ that lies outside $T_{cN'}(\alpha)$ is at least at Euclidean distance
$$
cN'\frac{\tan(\alpha'-\alpha)}{\cos(\alpha)}
$$
farther to the exterior of $\pazocal{C}_{\alpha'}^\theta(N')$ than the origin. We next recall that the time constant in any direction is at least half its value in the axis direction (due to convexity the shape; it is contained in a square and contains a diamond). It follows that for any $u\in\partial\pazocal{C}_\alpha^\theta\setminus T_{cN'}(\alpha)$ and $v$ in the exterior of $\pazocal{C}_{\alpha'}^\theta(N')$, we have
$$
\mu(v-u)\ge\frac{\mu({\bf e}_1)}{2}|v-u|\ge N'\frac{\mu({\bf e}_1)}{2}\Big[\frac{1}{3}+c\frac{\tan(\alpha'-\alpha)}{\cos(\alpha)}\Big].
$$
On the event $\pazocal{G}\cap\pazocal{G}'$, and since edges outside of the vertical axis have in $\tilde\omega$ only increased, it follows that for such $u$ and $v$ we have, using~\eqref{eq:c_pick}, that
\begin{equation}\label{eq:gamma3}
\tilde T(\gamma)\ge N'\frac{\mu({\bf e}_1)}{12}\Big[1+3c\frac{\tan(\alpha'-\alpha)}{\cos(\alpha)}\Big]>N'(a+\Delta)>N'a+K.
\end{equation}
Again we draw the conclusion that along no such path type~1 may reach $\ell{\bf e}_2$ in time $a\ell$, for $\ell=1,2,\ldots,N'$. This shows that type~2 will reach each such point before type~1, preventing type~1 to pass through them at all. In addition, we note from~\eqref{eq:gamma1} that type~2 will reach $N'{\bf e}_2$ in time $N'a$, while it from~\eqref{eq:gamma1}-\eqref{eq:gamma3} follows that it will take type~1 time exceeding $N'a+K$ to reach any point at the boundary of $\pazocal{C}_{\alpha'}^\theta(N')$. That is, type~2 has its $K$ time units to spare, and will thus reach $N{\bf e}_2$ in time $N'a+K$, at which point type~1 has yet to reach the boundary of $\pazocal{C}_{\alpha'}^\theta(N')$. In conclusion, type~2 will manage to escape type~1 to infinity along a geodesic starting at $N{\bf e}_2$, as desired.

To conclude the proof we note that $\tilde\omega$ is equal to $\omega$ in distribution, and hence that type~2 is able to escape type~1 with positive probability. This ends the proof of the survival part of Theorem~\ref{thm2}.

\section{Non-survival for flat initial conditions}
\label{sec:flat_start}

It remains to show that survival of type~2 is not possible for flat initial conditions. This can be considered as the critical case, balancing between the survival and non-survival regimes. We shall prove the following theorem, extending the result from~\cite{DH07,AP17} to arbitrary directions.

\begin{theorem}\label{thm:flat_start}
Consider competing first-passage percolation on $\Z^2$, with continuous passage time distribution satisfying~\eqref{pt_assumption}, and with flat initial condition $I(\pazocal{C}_{\pi/2}^\theta)$. For every $\theta\in[0,2\pi)$, with probability one, type~2 will infect at most finitely many nodes.
\end{theorem}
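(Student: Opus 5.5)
By Proposition~\ref{prop:geodesic}, it suffices to show that, almost surely, type~2 infects at most finitely many vertices along every $g\in\pazocal{T}_0$. The initial set $\pazocal{C}_{\pi/2}^\theta\setminus\{0\}$ is the closed half-plane $\{x: x\cdot(\cos\theta,\sin\theta)\le 0\}$ minus the origin. I will follow the strategy of~\cite{DH07,AP17} and reduce survival to a translation-invariance contradiction, using the Busemann/geodesic-measure machinery of~\cite{DH14} to handle non-rational directions.

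\textbf{Step 1: reduce to a statement about all geodesics.} Fix $g\in\pazocal{T}_0$. Type~2 survives along $g$ only if $B_g(0,y)<0$ for all $y$ in the half-plane, up to ties. Since $B_g(0,y)=-B_g(y,0)$, this says that the origin is strictly closer than every half-plane vertex to the far part of $g$. Using the Busemann functions as a cocycle, I define the event
$$F_v=\{\exists\, g\in\pazocal{T}_v \text{ with } B_g(v,y)<0 \ \forall y\in v+\pazocal{C}^\theta_{\pi/2}\setminus\{v\}\}.$$
The task is to show $\PP(F_0)=0$.

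\textbf{Step 2: a counting/ergodic contradiction.} For rational $\theta=z/|z|$, the half-plane is invariant under translations by multiples of a lattice vector $w$ along its boundary line. Suppose $\PP(F_0)>0$. By ergodicity, a positive density of the points $jw$ satisfy $F_{jw}$, with witnessing geodesics $g_j$. Two such geodesics started at distinct boundary points $jw$ and $j'w$ cannot cross: the half-plane condition says each starting point is strictly closer to its own geodesic than every other boundary point. Hence they are disjoint and ordered, and they all emanate into the upper half-plane. Applying the Busemann condition of $g_j$ to the point $j'w$, and symmetrically, yields contradictory inequalities once two such disjoint geodesics enclose a third start point. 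This is exactly the planar trapping used in~\cite{DH07,AP17}, and it should go through for general continuous weights.

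\textbf{Step 3: irrational $\theta$.} This is the main obstacle, because there is no lattice translation preserving the half-plane. The approach I would try first is to use translations $v$ close to the boundary line, with $v\cdot(\cos\theta,\sin\theta)\in[0,1)$. Such $v$ are equidistributed along the line, and for these the shifted half-plane $v+\pazocal{C}_{\pi/2}^\theta$ contains the original one up to a strip of bounded width. Requiring survival against the larger initial set (the half-plane plus the strip) only makes survival harder. So I would define $F_v$ using the enlarged set $\{y: y\cdot(\cos\theta,\sin\theta)\le 1\}$, and then check that positive probability of the original event implies positive probability of this stronger event. The latter I would get by a local modification: raise the weights on the finitely many strip edges near the origin, then argue that far strip points do not matter by the linear growth in~\eqref{eq:shapethm}. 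If that modification fails, the fallback is the geodesic measures of~\cite{DH14}: average over translations along the line to build a measure that is stationary in the direction of the line, then run the nonintersecting-geodesic density argument under that measure.

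Step~3 is where I expect the real difficulty: reconciling the absence of an exact symmetry with the need for a stationarity-based contradiction, and in particular controlling the boundary strip uniformly.
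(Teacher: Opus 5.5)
There is a genuine gap, and it is in Step~2 rather than Step~3: the ``contradictory inequalities'' do not exist. The conditions $B_{g_j}(jw,j'w)<0$ and $B_{g_{j'}}(j'w,jw)<0$ involve Busemann functions of two \emph{different} geodesics, so they are compatible. This remains true when a third surviving start point lies between $g_j$ and $g_{j'}$, since the region between two disjoint geodesics to infinity is unbounded and nothing gets trapped. For instance, for $\theta=\pi/2$, take an environment where the vertical edges in the upper half-plane are cheap and all other edges are expensive. Then every boundary point survives along its vertical ray and all your Busemann conditions hold at once.

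So a positive density of disjoint surviving boundary points is not contradictory by itself. The paper derives exactly this as an intermediate statement, \eqref{eq:Malpha}, and the real work comes afterwards. The contradiction must come from the i.i.d.\ structure through a coalescence input, and whatever you import from the coordinate-direction proofs has to supply it. The paper's mechanism runs as follows.
\begin{enumerate}[\quad (1)]
\item The law of $C^\ast(v)$ is translation invariant, so the same density $\vep$ of infinite clusters appears for the competition started from every shifted half-plane $\ell_\alpha^-$, $\alpha<0$.
\item These clusters must all cross $\ell_0$. Counting then shows that, for almost every $\alpha<0$, a fixed window $I_{m,0}$ of $m$ boundary points of $\ell_0^+$ is reached by at least two distinct sources with probability at least $\vep/2$.
\item Hence two geodesics from $I_{m,0}$ to the far half-plane $\ell_\alpha^-$ are disjoint.
\item Averaging over $\alpha$ and passing to a weak limit gives a Damron--Hanson measure in direction $\theta+\pi$ under which two infinite geodesics are disjoint with probability at least $\vep/2$. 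This contradicts Theorem~\ref{thmDH2}(b).
\end{enumerate}
Your fallback uses the geodesic measures only to gain stationarity. What is actually needed from them is coalescence, and in the direction opposite to $\theta$.

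Your Step~3 reduction is close in spirit to the paper's $C^\ast(v)$ and Lemma~\ref{lma:Cast}, but the claim that far strip points ``do not matter by linear growth'' fails as stated. A strip point $y$ lies within bounded distance of some boundary point $v_k$ of the original half-plane. Survival only gives $B_g(0,v_k)\le 0$, and a priori this margin need not stay away from zero along the infinitely many far $v_k$. The order-one term $B_g(v_k,y)$, which may well be positive, can then flip the sign of $B_g(0,y)=B_g(0,v_k)+B_g(v_k,y)$. The shape theorem does not help, since what must be controlled is a difference of passage times from two nearby starting points to far points of $g$. The paper handles this in two steps:
\begin{enumerate}[\quad (1)]
\item It first manufactures a uniform margin, $B_g(0,v_k)<-M$ for $|k|\ge N$, by resampling the initial segment of $g$ to be cheap (Lemma~\ref{lma:advantage}).
\item Only then does it apply the local modification (a cheap vertical column of length $m$ surrounded by expensive edges) that upgrades survival to $|C^\ast(0)|=\infty$.
\end{enumerate}
The same issue affects your Step~1. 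Survival yields only $B_g(0,y)\le 0$, so deducing $\PP(F_0)>0$ with your strict inequality also requires a modification argument of this kind.
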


We remark that the non-survival part of Theorem~\ref{thm2} is implied by Theorem~\ref{thm:flat_start}. Note also that under the assumptions of Theorem~\ref{thm1}, Theorem~\ref{thmDH} implies that the condition of Theorem~\ref{thm2} is satisfied, so that Theorem~\ref{thm1} is an immediate consequence of Theorems~\ref{thm2} and~\ref{thm:flat_start}. In order to prove Theorem~\ref{thm1} it thus suffices to prove the above theorem.

The most natural approach to prove this theorem would probably be to attempt an adaptation of either of the two existing proofs, in~\cite{DH07,AP17}, treating the case of coordinate directions. We shall instead opt for another approach, which gives us the opportunity to highlight the connection between non-survival for a flat initial condition with the coalescence of infinite geodesics in the geodesic measures constructed in~\cite{DH14}. In fact, we expect that either approach will roughly amount to similar argumentation, as we comment further upon in Remark~\ref{rem:alt_proof} below.

Before attending to the proof of the theorem, we revise the notion of geodesic measures as introduced by Damron and Hanson~\cite{DH14}.

\subsection{Geodesic measures}

Fix a direction $\theta\in[0,2\pi)$, and let $\rho:\R^2\to\R$ be the corresponding linear functional supporting to $\mathcal{A}$ in direction $\theta$. For every $\alpha\in\R$, we let
\begin{equation}\label{eq:ell}
\ell_\alpha^+:=\big\{x\in\R^2:\rho(x)\ge\alpha\big\}\quad\text{and}\quad\ell_\alpha^-:=\big\{x\in\R^2:\rho(x)\le\alpha\big\}
\end{equation}
denote the two half-planes obtained by bisecting the plane in direction $\theta$ at `distance' $\alpha$ from the origin. (Note that `distance' here does not mean Euclidean distance.) For $\alpha\ge0$ we let
$$
\mathcal{F}_\alpha:=\big\{\geo(z,\ell_\alpha^+):z\in\ell_\alpha^-\cap\Z^2\big\}
$$
denote the collection of (finite) geodesics from points in the plane to the half-plane $\ell_\alpha^+$.

Let $\pazocal{E}$ denote the set of nearest-neighbour edges of the square lattice, and let $\bar{\pazocal{E}}$ denote the set of directed nearest-neighbour edges. Let $\Omega_1:=[0,\infty)^\pazocal{E}$, $\Omega_2:=\{0,1\}^{\bar{\pazocal{E}}}$ and $\Omega_3:=(\R^2)^{\Z^2}$. Given a configuration $\omega\in\Omega_1$ of edge weights, and $\alpha\ge0$, let $\eta_\alpha\in\Omega_2$ be the configuration that encodes the edges contained in some geodesic in $\mathcal{F}_\alpha$, with direction towards $\ell_\alpha^+$.  Moreover, let $\zeta_\alpha\in\Omega_3$ be the associated configuration of Busemann differences, defined as
$$
\zeta_\alpha(z):=\big(T(z+{\bf e}_1,\ell_\alpha^+)-T(z,\ell_\alpha^+),T(z+{\bf e}_2,\ell_\alpha^+)-T(z,\ell_\alpha^+)\big).
$$
Consider the map $\Psi_\alpha:\Omega_1\to\Omega_1\times\Omega_2\times\Omega_3$ where $\omega\mapsto(\omega,\eta_\alpha,\zeta_\alpha)$, and let $\nu_\alpha$ denote the push-forward of $\PP$ through the map $\Psi_\alpha$, where $\tilde\Omega:=\Omega_1\times\Omega_2\times\Omega_3$ is equipped with the product topology and the Borel sigma-algebra.

Damron and Hanson proceed with a limiting argument, in order to obtain a measure on infinite geodesics. First, they introduce an averaging step, in order to guarantee a shift invariant measure in the limit. For $n\ge1$, let
$$
\nu_n^\ast(\cdot):=\frac1n\int_0^n\nu_\alpha(\cdot)\,d\alpha.
$$
Observe that, due to subadditivity, $\zeta_\alpha$ is coordinate-wise bounded in absolute value by the edge weights of the corresponding edges. The sequence of measures $(\nu_n^\ast)_{n\ge1}$ is thus tight, and Prokhorov's theorem implies that there exists a weakly convergent subsequence. Damron and Hanson show that any subsequential limit $\nu$ is supported on infinite families of geodesics with nice properties, some of which are summarised in the following theorem.

\begin{theorem}\label{thmDH2}
Consider first-passage percolation on $\mathbb{Z}^2$ with a continuous passage time distribution satisfying~\eqref{pt_assumption}. Let $\theta\in[0,2\pi)$ be a direction and let $\rho:\R^2\to\R$ denote the corresponding supporting linear functional. Every subsequential weak limit $\nu$ of the sequence $(\nu_n^\ast)_{n\ge1}$ is invariant with respect to translations and satisfies the following properties: For $\nu$-almost every $(\omega,\eta,\zeta)\in\tilde\Omega$ we have that
\begin{enumerate}[\quad (a)]
\item for every $z\in\Z^2$ there exists from $z$ a unique forwards path $\Gamma_z$ in $\eta$ which is a geodesic;
\item for every $y,z\in\Z^2$ the geodesics $\Gamma_y$ and $\Gamma_z$ coalesce.
\end{enumerate}
Moreover, if $\rho$ is tangent to $\mathcal{A}$, then $\nu$-almost surely
\begin{enumerate}[\quad (c)]
\item for every $z\in\Z^2$ the Busemann function of the geodesic $\Gamma_z$ is asymptotically linear to $\rho$.
\end{enumerate}
\end{theorem}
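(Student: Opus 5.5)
The plan is to follow Damron and Hanson~\cite{DH14}, relying on three features of the construction. The averaging over $\alpha$ gives translation invariance. The closedness of the relevant properties under weak limits gives (a). A Burton--Keane type counting argument gives (b). For (c), a shape theorem for stationary cocycles is matched with the shape theorem~\eqref{shapethm}.

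\emph{Translation invariance and (a).} For a translation by $z\in\Z^2$, the measure $\nu_\alpha$ composed with the shift equals $\nu_{\alpha+\rho(z)}$, up to the lattice effect of the discrete half-plane. Averaging over $\alpha\in[0,n]$ therefore changes $\nu_n^\ast$ by at most $O(|\rho(z)|/n)$ in total variation, so any weak limit $\nu$ is invariant. For (a), under $\nu_\alpha$ every $z\in\ell_\alpha^-$ has exactly one outgoing edge in $\eta_\alpha$. The forward path from $z$ is $\geo(z,\ell_\alpha^+)$, and every finite segment of it is a geodesic. Both facts are local conditions on finitely many coordinates, so they pass to the limit; the geodesic condition is closed because weights are continuous, which makes ties a null set. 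The forward path from $z$ in the limit is infinite because, for fixed $z$, the probability under $\nu_n^\ast$ that $\ell_\alpha^+$ lies within distance $R$ of $z$ is $O(R/n)\to0$. Hence with $\nu$-probability one every finite forward segment can be extended, and $\Gamma_z$ is an infinite geodesic.

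\emph{Coalescence (b).} I would first show that $\nu$-a.s.\ the graph $\eta$ has no bi-infinite backward paths in a suitable sense, or at least that it is a forest whose components each carry one forward ray. Then I would argue by contradiction. If two paths $\Gamma_y$ and $\Gamma_z$ fail to coalesce with positive probability, planarity and translation invariance produce, with positive probability, three disjoint forward geodesics started from a box, with the middle one enclosed by the outer two. A local modification of weights inside the box, of the same kind as in Section~\ref{sec:sub}, makes the middle geodesic's component have finite backward cluster. This is then transported as a finite ``encounter''-type configuration of positive density. A mass transport (Burton--Keane) counting argument shows that the number of such disjoint trapped components in a box of side $L$ grows like $L^2$, while planarity bounds it by the perimeter $O(L)$. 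The modification step is the main obstacle. It has to be done at the level of the joint measure $\nu$, whose $\eta$-marginal is not a deterministic function of $\omega$. So I would perform the modification in the finite-$\alpha$ measures and pass to the limit, as in~\cite{DH14}.

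\emph{Linearity (c).} The limiting $\zeta$ defines a stationary, integrable additive cocycle $f(x,y)$ with $|f(x,x+{\bf e}_i)|\le\omega_{(x,x+{\bf e}_i)}$. Along the coalesced geodesics this cocycle coincides with $B_{\Gamma_z}$. The ergodic theorem for cocycles (a shape theorem under the moment assumption~\eqref{pt_assumption}) gives $f(0,y)=\varrho(y)+o(|y|)$ for some random linear $\varrho$, with $\E$-mean determined by $\E\zeta(0)$. Subadditivity gives $f(0,y)\le T(0,y)$, so $\varrho\le\mu$. Along $\Gamma_0$ we have $f(0,y)=T(0,y)$, so $\varrho=\mu$ in the asymptotic direction of $\Gamma_0$, which makes $\varrho$ supporting. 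From the construction, $\E\zeta$ pairs with directions so that $\varrho(x)=1$ at the point of $\partial\mathcal{A}$ touched by $\rho$. When $\rho$ is tangent, uniqueness of the supporting line at that point forces $\varrho=\rho$. The cocycle identity $B_{\Gamma_z}(x,y)=f(x,y)$, which follows from (b), then yields (c) for every $z$.
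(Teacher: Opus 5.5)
Your proposal takes essentially the paper's route. The paper proves this theorem purely by citing Damron--Hanson~\cite{DH14} (Theorem~1.11 there for (a)--(b), and Theorem~4.3, Corollary~4.7 and Proposition~5.1 for (c)), and your sketch reconstructs that structure, including doing the coalescence modification in the finite-$\alpha$ measures before passing to the limit.

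One step in (c) needs repair: it uses ``the asymptotic direction of $\Gamma_0$'', whose existence is one of the open conjectures recalled in Section~\ref{s:busemann}. It is not needed, since $\E_\nu f(0,\cdot)=\rho$ by the averaging construction and $f\le T$ gives $\varrho\le\mu$. So at a point $p$ where $\rho$ is tangent you get $\varrho(p)\le\mu(p)=1=\E_\nu\varrho(p)$, hence $\varrho(p)=1$ almost surely, and uniqueness of the supporting line at $p$ forces $\varrho=\rho$.
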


\begin{proof}
Parts \emph{(a)}-\emph{(b)} of Theorem~\ref{thmDH2} follow from Theorem~1.11 in~\cite{DH14}. Part~\emph{(c)} is not explicitly stated in~\cite{DH14}, but is a consequence of their Theorem~4.3, Corollary~4.7 and Proposition~5.1.
\end{proof}

We remark that Theorem~\ref{thmDH} is a direct consequence of part~\emph{(c)} of Theorem~\ref{thmDH2}.

\subsection{Preliminary lemmas}

By lattice symmetry, we may without restriction assume that $\theta\in[\pi/4,\pi/2]$. Recall the definition of $\ell_\alpha^-$ in~\eqref{eq:ell}. For $v\in\ell_0^-\cap\Z^2$ we let $C(v)$ denote the set of points infected by $v$ in the competition process when all points in $\ell_0^-\cap\Z^2$ are initially infected, that is,
$$
C(v):=\big\{z\in\Z^2:T(v,z)<T(u,z)\text{ for all }u\in\ell_0^-\cap\Z^2\setminus\{v\}\big\}.
$$
Hence, $G_{\theta,\pi/2}$ coincides with the event $\{|C(0)|=\infty\}$. We note that the laws of the sets $C(v)$ differ for different $v$, in particular if $\theta$ is an irrational direction. This complicates the application of the ergodic theorem, and we shall need to work with a variant of the sets $C(v)$.

Given $v\in\Z^2$, we let $\alpha(v):=\min\{\alpha\in\R:v\in\ell_\alpha^-\}$, and set
$$
C'(v):=\big\{z\in\Z^2:T(v,z)<T(u,z)\text{ for all }u\in\ell_{\alpha(v)}^-\cap\Z^2\setminus\{v\}\big\}.
$$
The distribution of $C'(v)$ is identical for all $v\in\Z^2$, up to translations by the vector $v$. However, for different $v$, the sets $C'(v)$ do not (necessarily) correspond to the same competition problem.

Since we assume that $\theta\in[\pi/4,\pi/2]$, the point ${\bf e}_2=(0,1)$ (and possibly also ${\bf e}_1=(1,0)$) necessarily lies outside $\ell_0^-$. Let $\Delta=\Delta(\theta)$ denote the least $\alpha$ for which this is not the case, i.e.\
$$
\Delta:=\min\big\{\alpha\ge0:{\bf e}_2\in\ell_\alpha^-\big\}=\sup\big\{\alpha\ge0:{\bf e}_2\not\in\ell_\alpha^-\big\}.
$$
We will refer to a point $v\in\ell_\alpha^-\cap\Z^2$ as an (internal) {\bf boundary point} of $\ell_\alpha^-$ if it shares an edge with some point in $\Z^2\setminus\ell_\alpha^-$. In particular, the origin is a boundary point of $\ell_\alpha^-$ if and only if $\alpha\in[0,\Delta)$, and $v\in\Z^2$ is a boundary point in $\ell_\alpha^-$ if and only if $\alpha\in[\alpha(v),\alpha(v)+\Delta)$. Moreover, $\alpha(k{\bf e}_2)=k\Delta$, so that $k{\bf e}_2$ is a boundary point of $\ell_\alpha^-$ if and only if $\alpha\in[k\Delta,(k+1)\Delta)$.

For $v\in\Z^2$, we finally let
$$
C^\ast(v):=\big\{z\in\Z^2:T(v,z)<T(u,z)\text{ for all }u\in\ell_\alpha^-\cap\Z^2\text{ and }\alpha\in[\alpha(v),\alpha(v)+\Delta)\big\}.
$$
Note that $C^\ast(0)$ denotes the set of vertices infected by the origin, not only when all points in $\ell_0^-$ are initially infected, but when all points in $\ell_\alpha^-$ are initially infected for all $\alpha$ for which the origin is a boundary point in $\ell_\alpha^-$. Hence, $C^\ast(0)$ is a subset of $C(0)$, and hence a more restrictive measurement of the progression from the origin in the competition process starting with all points in $\ell_0^-$ being infected.

Note further that the law of $C^\ast(v)$ is identical for all $v\in\Z^2$. Moreover, $C^\ast(v)$ retains information of the competition process with flat initial configuration, as long as we restrict our attention to boundary points of $\ell_0^-$. For this reason we shall work with $C^\ast(v)$, rather than $C(v)$. Considering the events $C^\ast(v)$ will also be important in order to enable a comparison with the Damron-Hanson geodesic measures, at least when $\theta$ is an irrational direction.

We will argue by means of contradiction. Our first goal will be to show that if $\{|C(0)|=\infty\}$ occurs with positive probability, then so does $\{|C^\ast(0)|=\infty\}$. The first step in this is the following lemma.

\begin{lemma}\label{lma:advantage}
Suppose $\PP(|C(0)|=\infty)>0$. Let $v_k$ denote the top-most element in $\ell_0^-\cap\{k\}\times\Z$. Then, for every $M>0$ there exists $N\ge1$ such that
$$
\PP\big(\exists\, g\in\pazocal{T}_0:\,B_g(0,v_k)<0\text{ for } |k|<N,\text{ and }B_g(0,v_k)<-M\text{ for }|k|\ge N\big)>0.
$$
\end{lemma}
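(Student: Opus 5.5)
The plan is to start from Proposition~\ref{prop:geodesic} and then use a local modification to turn the weak inequality it provides into the quantitative one in the lemma. On $\{|C(0)|=\infty\}$, Proposition~\ref{prop:geodesic} (with $I=\ell_0^-\cap\Z^2\setminus\{0\}$) gives a geodesic $g=(z_1,z_2,\ldots)\in\pazocal{T}_0$ all of whose vertices are infected by type~2. So $T(0,z_j)<T(u,z_j)$ for every $u\in\ell_0^-\cap\Z^2\setminus\{0\}$ and every $j$, and letting $j\to\infty$ gives $B_g(0,u)\le 0$; in particular $B_g(0,v_k)\le0$ for all $k\neq0$. I will also use the elementary monotonicity that $j\mapsto T(0,z_j)-T(u,z_j)$ is nondecreasing, since $T(0,z_{j'})=T(0,z_j)+T(z_j,z_{j'})$ while $T(u,z_{j'})\le T(u,z_j)+T(z_j,z_{j'})$. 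Hence $B_g(0,u)$ is a limit from below of strictly negative numbers. The task is therefore to push these limits down by a definite amount: below $0$ for $|k|<N$ and below $-M$ for $|k|\ge N$.

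The main step is a finite-energy modification in the spirit of the argument of Section~\ref{sec:sub}. First I fix $R$ so large that, with positive probability, $|C(0)|=\infty$ holds and the initial segment of the type~2 geodesic $g$ up to its first exit from the box $\Lambda_R=[-R,R]^2$ is a fixed deterministic path $\pi$. Next I resample the weights on a strip $S$ of edges adjacent to $\ell_0^-$ inside $\Lambda_R$, excluding the edges of $\pi$, and set them to be at least $b'$, where $b'$ is chosen as in~\eqref{eq:good_weights} with $\PP(\omega_e>b')>0$. Since all modified weights increase and none lie on $\pi$, type~2 keeps all of $g$. The tree $\pazocal{T}_0$ and the passage times $T(0,z_j)$ along $g$ are unchanged, because $g$ was already optimal and its edges were not touched.

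The key computation is then that, for every $u=v_k$, any path from $u$ to $z_j$ either crosses $S$, paying at least $b'-b$ extra per crossed layer, or travels around $\Lambda_R$. By~\eqref{eq:shapethm}, the detour costs an amount comparable to $\mu$ of a displacement of order $R$. Choosing $b'$ and the thickness of $S$ appropriately, this yields $T(u,z_j)-T(0,z_j)\ge M+1$ uniformly in $j$ for all $v_k$ within the box, and hence $B_g(0,v_k)<-M$ for those $k$; a fortiori $B_g(0,v_k)<0$ there. Since the modification has positive probability given $\pazocal{G}$, and the modified $\tilde\omega$ has the law of $\omega$, the resulting event has positive probability, just as in the proof of Theorem~\ref{thm2}.

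The expected obstacle is the tail $|k|\ge N$, since a box modification only affects finitely many $v_k$. For these I plan to argue directly, without modification, that $\sup_{|k|\ge N}B_g(0,v_k)\to-\infty$ in probability as $N\to\infty$ on the event. Take $w=v_{\pm m}$ a fixed far boundary point. Then $B_g(0,v_k)=B_g(0,w)+B_g(w,v_k)$, and I control $B_g(w,v_k)\le T(w,v_k)-\text{(gain)}$ via planarity. Concretely, the geodesic from $v_k$ to $z_j$ must cross the type~1 region separating $v_k$ from $g$, and it can be rerouted through points of $\ell_0^-$ closer to the origin. Combined with~\eqref{eq:shapethm}, which gives $|T(x,y)-\mu(y-x)|\le\vep\max\{|x|,|y|\}+C$, this should yield $B_g(0,v_k)\le -c|k|+\vep|k|+C$. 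If this linear bound proves too hard to extract without asymptotic linearity of $B_g$, the fallback is to take $N$ as the box size in the modification above, letting $R$ (and thus $N$) grow. The residual claim that a.s.\ only finitely many $k$ have $B_g(0,v_k)\ge -M$ would then follow from the facts that $B_g(0,v_k)\le0$ for all $k$ and that $\liminf_{|k|\to\infty}B_g(0,v_k)=-\infty$. The latter is the delicate point, and I would try to deduce it from the fact that type~2's advantage at $z_j$ over $v_k$ grows as $v_k$ moves away, by the shape theorem.
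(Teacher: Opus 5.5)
Your first step (Proposition~\ref{prop:geodesic} giving $B_g(0,v_k)\le0$, plus monotonicity of $j\mapsto T(0,z_j)-T(u,z_j)$) is fine. There is, however, a genuine gap at the tail $|k|\ge N$, which is the real content of the lemma.

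A modification confined to the box $\Lambda_R$ gives no gain against the infinitely many $v_k$ with $|k|>R$ whose routes to far points of $g$ avoid the box. So everything rests on your direct claim that $B_g(0,v_k)\to-\infty$, and neither version of that argument works.

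\begin{itemize}
\item The shape theorem \eqref{eq:shapethm} controls $T(v_k,z_j)-T(0,z_j)$ only up to an error of order $\vep|z_j|$. This diverges as $j\to\infty$, so it says nothing about the Busemann limit.
\item The bound $B_g(0,v_k)\le -c|k|+\vep|k|+C$ is not even what one should expect. Suppose $B_g(0,\cdot)$ were asymptotically linear to a supporting functional $\rho'$. Then $B_g(0,v_k)\le0$ as $k\to+\infty$ and as $k\to-\infty$ forces $\rho'$ to vanish along the line $\{\rho=0\}$, so $B_g(0,v_k)=o(|k|)$.
\item The fallback has a logical slip. $\liminf_{|k|\to\infty}B_g(0,v_k)=-\infty$ does not imply that only finitely many $k$ have $B_g(0,v_k)\ge -M$; you would need the full limit, and you have no argument for it.
\end{itemize}

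There are also smaller problems in the box step. On the event in \eqref{eq:good_weights} you only know that old weights are $\le b'$ and new ones are $\ge b'$, so the per-edge increase is merely nonnegative, not $\ge b'-b$. Moreover, $v_k$ near $\partial\Lambda_R$ can leave the box inside $\ell_0^-$ without touching $S$, so no uniform gain of $M+1$ holds for all $v_k$ in the box.

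The missing idea is to create the advantage on type~2's side instead of penalizing type~1.

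\begin{itemize}
\item Let $r$ be the infimum of the support. Fix $\delta>0$ small and $n>M/\delta$ such that, with positive probability, three things hold: $|C(0)|=\infty$; every path from $0$ to $\ell_1$-distance $n$ has average weight at least $r+2\delta$; and the initial segment of $g$ up to $\ell_1$-distance $n$ is a fixed path $\gamma$ whose weights all exceed $r+2\vep$.
\item Resample the weights on $\gamma$ to lie below $r+\vep$. Weights only decrease, and only on $g$, so $g$ remains a geodesic won by type~2.
\item The passage time $T(0,z)$ then drops by at least $\delta n>M$ for every $z\in g$ beyond $\gamma$.
\item A path from $v_k$ to such a $z$ either avoids $\gamma$, or first has to reach $\gamma$. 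If it avoids $\gamma$, its cost is unchanged and hence exceeds the new $T(0,z)$ by more than $M$. If it reaches $\gamma$, it pays at least $T(v_k,\gamma)$ before doing so.
\item Since $\gamma$ is a fixed finite path, the shape theorem makes $\inf_{|k|\ge N}T(v_k,\gamma)$ larger than $M$ plus the new passage time of $\gamma$, once $N$ is large.
\end{itemize}

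This single finite modification gives $B_g(0,v_k)<-M$ simultaneously for all $|k|\ge N$, and strict negativity for $|k|<N$. That is exactly what a box modification around the type~1 sources cannot achieve.
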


\begin{proof}
By Proposition~\ref{prop:geodesic} there exists an event $E$ of full measure such that
\begin{equation}\label{eq:pre_advantage}
\{|C(0)|=\infty\}\cap E\subseteq\big\{\exists\, g\in\pazocal{T}_0:\,T(0,z)<T(v_k,z)\text{ for every }z\in g\text{ and }k\neq0\big\}.
\end{equation}
We note, in particular, that on the above event we have $B_g(0,v_k)\le0$ for all $k\in\Z$.

Fix $M>0$. Let $r\ge0$ denote the infimum of the support of the edge-weight distribution. Fix $\delta>0$ and let $A_n$ denote the event that the average weight per edge, along every path from the origin to some points at $\ell_1$-distance $n$, is at least $r+2\delta$. A standard path counting argument shows that for $\delta>0$ small enough, the probability of $A_n$ tends to 1 as $n\to\infty$. We fix $n\ge M/\delta$ so that $\{|C(0)|=\infty\}\cap E\cap A_n$ occurs with positive probability.

For any self-avoiding path $\gamma$ between the origin and some point at $\ell_1$-distance $n$, let $C_\gamma$ denote the event that some geodesic $g$ satisfying the property in~\eqref{eq:pre_advantage} contains $\gamma$. Since $\{|C(0)|=\infty\}\cap E\cap A_n$ occurs with positive probability, also $C_\gamma$ occurs with positive probability for some path $\gamma$. Let $C_{\gamma,N}$ denote the event that $C_\gamma$ occurs, and that $T(v_k,\gamma)>M$ for all $|k|\ge N$. By the shape theorem, for large enough $N$ also $C_{\gamma,N}$ occurs with positive probability. Finally, we choose $\vep\in(0,\delta]$ so that
$$
\PP\big(C_{\gamma,N}\cap\{\omega_e>r+2\vep\text{ for all }e\in\gamma\}\big)>0.
$$

Let $\omega$ and $\omega'$ be two independent weight configurations, and construct $\tilde\omega$ by identifying $\tilde\omega$ with $\omega'$ on $\gamma$, and with $\omega$ elsewhere. Let $D_\gamma$ denote the event that $\{\omega'_e\le r+\vep\text{ for all }e\in\gamma\}$. Again $D_\gamma$ occurs with positive probability, and since $\omega$ and $\omega'$ are sampled independently, it follows that
$$
\PP\big(C_{\gamma,N}\cap\{\omega_e>r+2\vep\text{ for all }e\in\gamma\}\cap D_\gamma\big)=\PP\big(C_{\gamma,N}\cap\{\omega_e>r+2\vep\text{ for all }e\in\gamma\}\big)\PP(D_\gamma)>0.
$$

Now suppose that $C_{\gamma,N}\cap\{\omega_e>r+2\vep\text{ for all }e\in\gamma\}\cap D_\gamma$ occurs. Since $g$ is a geodesic in $\omega$ which contains $\gamma$, and since the configurations $\tilde\omega$ and $\omega$ differ only on $\gamma$, so that on the event $D_\gamma$ the configuration $\tilde\omega$ is coordinate-wise smaller than $\omega$, it follows that $g$ is a geodesic also in $\tilde\omega$. That $B_g(0,v_k)\le0$ for all $k\in\Z$ remains true also in $\tilde\omega$. Moreover, travelling from the origin along $g$ to distance $n$ has in $\tilde\omega$ become (at least) $\delta n\ge M$ less expensive. As a consequence, for every $|k|\ge N$, for which the travel time from $v_k$ to any vertex along $\gamma$ is at least $M$ we will, in $\tilde\omega$, have $B_g(0,v_k)<-M$.

The proof ends with us noting that $\omega$ and $\tilde\omega$ are equal in distribution, and hence that the conclusion of the lemma has been proven.
\end{proof}

The above lemma introduced a notation for boundary points of $\ell_0^-$, which correspond to the set of initially infected sites in the competition process. Since we shall need to consider translates of this half-plane, we extend this notation to boundary points of $\ell_\alpha^-$ for all $\alpha\in\R$.

Given $\alpha\in\R$ and $k\in\Z$, let $v_k^\alpha$ denote the top-most element in $\ell_\alpha^-\cap\{k\}\times\Z$. Since we assume that $\theta\in[\pi/4,\pi/2]$, the point $v_k^\alpha$ is well-defined, and corresponds to the unique (internal) boundary point of $\ell_\alpha^-$ along the vertical line $\{k\}\times\Z$. That is, the bi-infinite sequence $(v_k^\alpha)_{k\in\Z}$ is an enumeration of all boundary points of $\ell_\alpha^-$.

\begin{lemma}\label{lma:Cast}
If $\PP(|C(0)|=\infty)>0$, then also $\PP(|C^\ast(0)|=\infty)>0$.
\end{lemma}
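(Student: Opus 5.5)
The plan is to upgrade the conclusion of Lemma~\ref{lma:advantage}, which concerns only the boundary points $(v_k)_{k\in\Z}=(v_k^0)_{k\in\Z}$ of $\ell_0^-$, to all boundary points $v_k^\alpha$ with $\alpha\in[0,\Delta)$. I then conclude as in the discussion after Proposition~\ref{prop:geodesic}: if $B_g(0,u)<0$ for every $u\in\ell_\alpha^-\cap\Z^2\setminus\{0\}$ and every $\alpha\in[0,\Delta)$, then type~2 conquers all but finitely many vertices of $g$ against each of these initial sets simultaneously. Some care is needed in that step, since a Busemann inequality only gives eventual domination along $g$. I would arrange strictness with a margin, so that a common tail of $g$ works for all $\alpha$, and then absorb the finite initial piece by a local modification near the origin. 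This yields $|C^\ast(0)|=\infty$ with positive probability.

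First, I reduce to boundary points. For fixed $\alpha$, a geodesic from any $u\in\ell_\alpha^-$ to a far point $z\in g$ must exit $\ell_\alpha^-$ through a boundary point $v_k^\alpha$. Hence $T(\ell_\alpha^-,z)=\min_k T(v_k^\alpha,z)$, and it suffices to control $B_g(0,v_k^\alpha)$. Because $\theta\in[\pi/4,\pi/2]$, each column $\{k\}\times\Z$ meets $\ell_\alpha^-\setminus\ell_0^-$ in at most one point as $\alpha$ ranges over $[0,\Delta)$. So $v_k^\alpha\in\{v_k,\,v_k+{\bf e}_2\}$, and the new points occur only for $k\ne0$; the origin stays the top point of column $0$ because $\alpha<\Delta$. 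It therefore suffices to show that, with positive probability, some $g\in\pazocal{T}_0$ satisfies $B_g(0,v_k+{\bf e}_2)<0$ for all $k\neq0$. At the same time $g$ should keep $B_g(0,v_k)<0$, which Lemma~\ref{lma:advantage} provides.

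The main obstacle is bounding $B_g(0,v_k+{\bf e}_2)$ uniformly in $k$. The trivial estimate $B_g(0,v_k+{\bf e}_2)\le B_g(0,v_k)+\omega_{(v_k,v_k+{\bf e}_2)}$ combined with Lemma~\ref{lma:advantage} handles $|k|\ge N$ only for those $k$ where the vertical edge weight is below $M$. Since weights may be unbounded, infinitely many columns can violate this. My first attempt would be to use a better comparison point. If $v_{k\pm1}$ has the same height as $v_k+{\bf e}_2$, then $v_k+{\bf e}_2$ is adjacent to it through a horizontal edge. More generally, I would bound $T(v_k+{\bf e}_2,z)$ from below by $\min_j\big(T(v_k+{\bf e}_2,v_j)+T(v_j,z)\big)$ together with the geodesic's first step. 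I would combine this with the strengthened shape theorem~\eqref{eq:shapethm} to show that for $|k|$ large the cheapest escape from $v_k+{\bf e}_2$ costs at most $\vep|k|+M_0$ more than from $\ell_0^-$. For this I would rerun the proof of Lemma~\ref{lma:advantage} with a gain that grows linearly in $|k|$, by lengthening the modified segment $\gamma$, so that the gain exceeds $\vep|k|+M_0$. This step is the one I am least sure of.

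For the finitely many $|k|<N$, I would do a separate local resampling of the edges near $v_k+{\bf e}_2$, raising the weights of the edges joining these points to the rest of the lattice, in the spirit of the modification in Section~\ref{sec:sub}. Raising weights on edges not in $g$ keeps $g$ a geodesic. It also only increases $T(u,z)$ for $u$ in the new set, while leaving $T(0,z)$ and the $v_k$-comparisons unchanged, provided the modified edges avoid $g$ and the geodesics from the $v_k$. I would choose the modified edge set so that this holds. Since the resampled configuration has the same law, the event that $g$ beats every $\ell_\alpha^-$ for $\alpha\in[0,\Delta)$ has positive probability, which gives $\PP(|C^\ast(0)|=\infty)>0$.
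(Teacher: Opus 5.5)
\textbf{There is a genuine gap, at the step you flagged.}

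Your reduction to the new competitors $v_k+{\bf e}_2$ is correct. The comparison $B_g(0,v_k+{\bf e}_2)\le B_g(0,v_k)+T(v_k,v_k+{\bf e}_2)$ is essentially the only handle available. Paths from $v_k+{\bf e}_2$ that enter $\ell_0^-$ must hit some $v_j$, and are then automatically slower than the origin. Your $\min_j$ lower bound is simply false for the dangerous paths, namely those that stay outside $\ell_0^-$.

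The handle needs $-B_g(0,v_k)>T(v_k,v_k+{\bf e}_2)$ for every relevant $k$. For unbounded weights the right-hand side is a.s.\ unbounded along these $k$, while Lemma~\ref{lma:advantage} supplies only a fixed margin $M$. Rerunning that lemma with a longer $\gamma$ cannot repair this. Resampling the weights on a finite edge set $F$ changes every passage time $T(x,z)$ by at most $\sum_{e\in F}|\omega_e-\omega'_e|$. So the gain in $B_g(0,v_k)$ is bounded uniformly in $k$, and no choice of $\gamma$ can dominate $\vep|k|+M_0$ for all $k$.

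Your treatment of $|k|<N$ has a second problem. Raising weights near $v_k+{\bf e}_2$ requires $g$ to avoid these vertices, and nothing guarantees this. For $\theta\in(\pi/4,\pi/2)$, for instance, ${\bf e}_1=v_1+{\bf e}_2$ is a competitor adjacent to the origin. If $g$ passes through it, then $B_g(0,{\bf e}_1)=T(0,{\bf e}_1)>0$, and no modification off $g$ helps.

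Your worry about only eventual domination is unfounded. The sequence $T(0,g_j)-T(y,g_j)$ is nondecreasing in $j$ with limit $B_g(0,y)$. Hence $B_g(0,y)<0$ already gives $T(0,z)<T(y,z)$ at every $z\in g$.

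\textbf{The missing idea is to lift the problem} rather than compare $v_k+{\bf e}_2$ with $v_k$ column by column. The paper applies Lemma~\ref{lma:advantage}, translated by $m{\bf e}_2$ (note $\ell^-_{m\Delta}=m{\bf e}_2+\ell_0^-$), with margin $m(r+\delta)$. This yields a geodesic $g$ from $m{\bf e}_2$ that beats every boundary point $v_i^{m\Delta}$ with $i\neq0$, and beats those with $|i|\ge N$ by more than $m(r+\delta)$.

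The paper then resamples the bounded window $(\ell^-_{m\Delta}\setminus\ell_0^-)\cap[-2N,2N]\times\R$. The vertical corridor from $0$ to $m{\bf e}_2$ is made to cost at most $m(r+\delta)$, and all other window edges are made expensive.

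Since $\ell^-_\alpha\subseteq\ell^-_{m\Delta}$ for every $\alpha<\Delta$, any competitor must either use the corridor or leave $\ell^-_{m\Delta}$ through some $v_i^{m\Delta}$ to reach $g$. For $|i|\ge N$, the Busemann margin exceeds the corridor cost, uniformly over all competitors however far out, with no per-column cost. For $|i|<N$, the competitor must first cross many expensive window edges. This uniform control over infinitely many distant competitors is exactly what your column-by-column comparison cannot supply.
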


\begin{proof}
Suppose $\theta\in[\pi/4,\pi/2]$, and that $\PP(|C(0)|=\infty)>0$. Let $r$ denote the infimum of the support of the weight distribution, and fix $\delta>0$ so that $\PP(\omega_e\ge r+2\delta)>0$. Next, pick an integer $m>1+(r+\delta)/\delta$.

For $N\ge1$ let $A_{m,N}$ denote the event that there exists $g\in\pazocal{T}_{m{\bf e}_2}$ such that
$$
B_g(m{\bf e}_2,v_k^{m\Delta})<0\text{ for } |k|<N,\text{ and }B_g(m{\bf e}_2,v_k^{m\Delta})<-m(r+\delta)\text{ for }|k|\ge N.
$$
Moreover, let
$$
A'_{m,N}:=\big\{T(v_k^{m\Delta},\{0\}\times\Z)>m(r+\delta)\text{ for all }|k|\ge N\big\},
$$
and for $M\ge0$, let
$$
A''_{m,N}:=\big\{\omega_e\le M\text{ for all $e$ in }(\ell_{m\Delta}^-\setminus\ell_0^-)\cap[-2N,2N]\times\R\big\}.
$$
By Lemma~\ref{lma:advantage} there exists $N\ge1$ such that $\PP(A_{m,N})>0$, and this remains true for all larger values of $N$ since $A_{m,N}$ is increasing in $N$. In addition, by the shape theorem, the probability that $A'_{m,N}$ occurs can be made arbitrarily close to 1 by increasing $N$. Finally, due to continuous passage times, we may make the probability of $A''_{m,N}$ occurring arbitrarily close to 1 by taking $M=M(N)$ large, while $\PP(\omega_e>M)>0$ remains true. That is, there exist $N\ge m$ and $M>r+2\delta$ such that both $\PP(\omega_e>M)>0$ and 
$$
\PP\big(A_{m,N}\cap A'_{m,N}\cap A''_{m,N}\big)>0.
$$
We fix $N\ge m$ and $M>r+2\delta$ accordingly.

We proceed once again with a resampling argument. Sample $\omega'$ independently of $\omega$ according to the same law, and let $\tilde\omega$ be the configuration which on $(\ell_{m\Delta}^-\setminus\ell_0^-)\cap[-2N,2N]\times\R$ coincides with $\omega'$, and which coincides with $\omega$ elsewhere. Hence, $\tilde\omega$ again has the law of $\omega$. Let
\begin{align*}
D_{m,N}:=&\big\{\omega'_e\le r+\delta\text{ for $e=((k-1){\bf e}_2,k{\bf e}_2)$ and }k=1,2,\ldots,m\big\}\\
&\cap\big\{\omega'_e\ge M\text{ for remaining $e$ in }(\ell_{m\Delta}^-\setminus\ell_0^-)\cap[-2N,2N]\times\R\big\}.
\end{align*}
By the choice of $M$, the event $D_{m,N}$ occurs with positive probability, and since $\omega$ and $\omega'$ are independent, we have
\begin{equation}\label{eq:pos_proba}
\PP\big(A_{m,N}\cap A'_{m,N}\cap A''_{m,N}\cap D_{m,N}\big)>0.
\end{equation}

We pause for a geometric observation. When $\theta=\pi/2$, then $\ell_0^-$ coincides with the lower half-plane, and the boundary points of $\ell_0^-$ all lie on the horizontal axis. For every $j\ge1$ the $\ell_1$-distance from any boundary point to $j{\bf e}_2$ is at least $j+1$. As $\theta$ decreases from $\pi/2$ to $\pi/4$, the distance to $j{\bf e}_2$ decreases as new boundary points appear. The distance is at its minimum for $\theta=\pi/4$, in which case there for every $j$ exists a boundary point of $\ell_0^-$ at distance $j$ from the point $j{\bf e}_2$. Note that for $\theta=\pi/4$, the set of boundary points coincide for $\ell_\alpha^-$ for all $\alpha\in[0,\Delta)$. 

Now, on the event that $A_{m,N}\cap A'_{m,N}\cap A''_{m,N}\cap D_{m,N}$ occurs, the following is true in the weight configuration $\tilde\omega$:
\begin{enumerate}[\quad (i)]
\item $T(0,j{\bf e}_2)\le(r+\delta)j$ for all $j=1,2,\ldots,m$;
\item $T(v_k^{m\Delta},j{\bf e}_2)>m(r+\delta)$ for all $j=1,2,\ldots,m$ and $|k|\ge N$;
\item $T(v_k^\Delta,j{\bf e}_2)>(r+2\delta)j$ for all $j=1,2,\ldots,m$ and $0<|k|<N$;
\item for any infinite path $g$ starting at $m{\bf e}_2$ satisfying the condition of $A_{m,N}$ (and hence is a geodesic in $\omega$) we have for every $z\in g$ that $T(m{\bf e}_2,z)<T(v_k^{m\Delta},z)$ for all $k\neq0$ and that $T(0,z)<T(v_k^{m\Delta},z)$ for all $|k|\ge N$.
\end{enumerate}
We now claim two things, still on the event $A_{m,N}\cap A'_{m,N}\cap A''_{m,N}\cap D_{m,N}$, and for any path $g$ as in~(iv): First that $T(v_k^{\Delta},j{\bf e}_2)-T(0,j{\bf e}_2)>0$ for all $j=1,2,\ldots,m$ and $k \neq 0$, and second that for every $z\in g$ we have $T(v_k^\Delta,z)-T(0,z)>0$ for all $k\neq 0$. Since any other path crossing at $k=0$ is also strictly slower than the origin due to the low weights on the vertical axis by (i), these claims imply that $z\in C^\ast(0)$ for every $z\in g$, and hence that $|C^\ast(0)|=\infty$, which by~\eqref{eq:pos_proba} would complete the proof.

We first note that by~(i) and~(iii) we have $T(v_k^\Delta,j{\bf e}_2)-T(0,j{\bf e}_2)>0$ for all $j=1,2,\ldots,m$ and $0<|k|<N$, whereas~(i) and~(ii) give the same for $|k|\ge N$. This verifies the first claim.

We next fix $z\in g$ and $k \neq 0$, and let $\gamma$ be any path from $v_k^\Delta$ to $z$. To verify the second claim it suffices to argue that $T(\gamma)-T(0,z)>0$. We first note that if $\gamma$ visits $j{\bf e}_2$ for some $j=1,2,\ldots,m$, then $T(\gamma)-T(0,z)>0$ by the first claim. It will therefore suffice to consider $\gamma$ that does not visit vertices of this type. To reach $z$, however, $\gamma$ must visit $v_i^{m\Delta}$ for some $i\neq0$. Let $\gamma_-$ denote the segment of $\gamma$ from $v_k^\Delta$ to the first such visit, and $\gamma_+$ denote the remainder of $\gamma$. If the first visit is to $v_i^{m\Delta}$ for some $|i|<N$, then $\gamma_-$ must pass at least $m$ edges of weight at least $M>r+2\delta$, and so by~(i) and~(iv) we have
$$
T(\gamma)-T(0,z)>T(\gamma_-)-T(0,m{\bf e}_2)+T(v_i^{m\Delta},z)-T(m{\bf e}_2,z)>m\delta+0>0.
$$
If instead the first visit is to $v_i^{m\Delta}$ for some $|i|\ge N$, then by~(iv)
$$
T(\gamma)-T(0,z)\ge T(v_i^{m\Delta},z)-T(0,z)>0.
$$
This verifies the second claim, and hence completes the proof.
\end{proof}

\subsection{Proof of Theorem~\ref{thm:flat_start}}

We are now ready to proceed to the proof of Theorem~\ref{thm:flat_start}. We shall argue by contradiction, and show that survival with positive probability is incompatible with the Damron-Hanson geodesic measures being supported on coalescing families of geodesics, contradicting Theorem~\ref{thmDH2}.

We may assume that $\theta\in[\pi/4,\pi/2]$, due to lattice symmetry. Aiming for a contradiction, we assume henceforth that $\PP(|C(0)|=\infty)>0$, so that by Lemma~\ref{lma:Cast} also $\PP(|C^\ast(0)|=\infty)>0$. 
We next claim that for every $\alpha\in\R$, almost surely,
\begin{equation}\label{eq:Malpha}
\lim_{n\to\infty}\frac{1}{n}\sum_{k=1}^n\mathbf{1}_{\{|C^\ast(v_k^\alpha)|=\infty\}}=\lim_{n\to\infty}\frac{1}{n}\sum_{k=-n}^{-1}\mathbf{1}_{\{|C^\ast(v_k^\alpha)|=\infty\}}=\PP\big(|C^\ast(0)|=\infty\big)>0.
\end{equation}
When $\theta$ is a rational direction, i.e.\ corresponding to $z/|z|$ for some $z\in\Z^2$, this follows from Birkhoff's standard point-wise ergodic theorem. That this remains true in irrational directions $\theta$ follows from the ergodic theorems covering directional subsequences; see e.g.~\cite[Chapter~8]{K85}.

The statement in~\eqref{eq:Malpha} holds for fixed $\alpha\in\R$, and the almost sure limit does not depend on $\alpha$. For $\alpha=0$, the statement can be interpreted in terms of the competition process with flat initial condition:
%If $\PP(|C(0)|=\infty)>0$, then by Lemma~\ref{lma:Cast} also $\PP(|C^\ast(0)|=\infty)>0$, and 
With probability one, a positive density of points along the boundary of $\ell_0^-$ will each be responsible for the infection of infinitely many vertices. The next step will be to translate this statement into a density statement for points in $\ell_0^+$ reached by different $v\in\ell_\alpha^-$, where $\alpha<0$. In other words, for $\alpha<0$ being fixed, vertices in $\ell_0^+$ belonging to distinct sets $C^\ast(v_k^\alpha)$ for $k\in\Z$.

Fix $\alpha<0$ and an even integer $m\ge 1+2/\vep$, where $\vep:=\PP(|C^\ast(0)|=\infty)>0$. For $k\in\Z$ let $I_{m,k}$ denote the set containing the $m$ boundary points of $\ell_0^+$, i.e.\ points of the form $v_k^0$, contained in the vertical strip $km{\bf e}_1+\{-m/2+1,\ldots,m/2\}\times\Z$. Together, the sets $\{I_{m,k}:k\in\Z\}$ form a partition of the (internal) boundary points of $\ell_0^+$. Recall that $\alpha<0$, and let $X^\alpha_{m,k}$ denote the number of boundary vertices $v_k^\alpha$ of $\ell_\alpha^-$ for which $C^\ast(v_k^\alpha)\cap I_{m,k}\neq\emptyset$. By the ergodic theorem along directional subsequences, we have for every fixed $\alpha<0$ that, almost surely,
\begin{equation}\label{eq:Xalpha}
\lim_{n\to\infty}\frac{1}{2n+1}\sum_{k=-n}^nX^\alpha_{m,k}=\E[X^\alpha_{m,0}].
\end{equation}

Let $\Lambda$ denote the set of $\alpha\in(-\infty,0)$ for which both~\eqref{eq:Malpha} and~\eqref{eq:Xalpha} occur. Let $\pazocal{L}$ denote Lebesgue measure. Then, by Fubini's theorem, for every $N<\infty$ we have
$$
\E\big[\pazocal{L}\big(\Lambda\cap[-N,0]\big)\big]=\int_{[-N,0]}\PP\big(\text{\eqref{eq:Malpha} and \eqref{eq:Xalpha} hold for $\alpha$}\big)\,d\alpha=N,
$$
and hence that $\Lambda$ has full Lebesgue measure as a subset of $(-\infty,0)$, almost surely. Consider $\alpha\in\Lambda$. By~\eqref{eq:Malpha} we have for all large $n$ that
$$
\sum_{k=(m-1)n+1}^{mn}\mathbf{1}_{\{|C^\ast(v_k^\alpha)|=\infty\}}\ge\vep n/2\quad\text{and}\quad\sum_{k=-[(m-1)n+1]}^{-mn}\mathbf{1}_{\{|C^\ast(v_k^\alpha)|=\infty\}}\ge\vep n/2.
$$
Pick $n\gg|\alpha|/\vep$ such that the above holds. Since $\vep n$ is large in comparison to $|\alpha|$, there is not enough room for the disjoint paths emanating from the $v_k^\alpha$, for $|k|\in((m-1)n,mn]$, whose $C^\ast$-sets are infinite, to escape through the vertical segment of $\{\pm mn\}\times\R$ contained in $\ell_0^-\setminus\ell_\alpha^-$.
This means that for all $|k|\le(m-1)n$ for which $|C^\ast(v_k^\alpha)|=\infty$, the sets $C^\ast(v_k^\alpha)$ will each necessarily have to contain a boundary point of $\ell_0^+$ contained in the strip $[-mn,mn]\times\R$. That is, for $\alpha\in\Lambda$ we have for all large $n$ that
$$
\frac{1}{2n+1}\sum_{k=-n}^nX_{m,k}^\alpha\ge\frac{1}{2n+1}\sum_{k=-(m-1)n}^{(m-1)n}\mathbf{1}_{\{|C^\ast(v_k^\alpha)|=\infty\}}.
$$
Sending $n\to\infty$, 
%For each fixed $\alpha>0$,
it follows from~\eqref{eq:Malpha} and~\eqref{eq:Xalpha} that
$$
\E[X_{m,0}^\alpha]\ge (m-1)\,\PP(|C^\ast(0)|=\infty)=(m-1)\vep.
$$
Together with the trivial bounds that $0\le X_{m,0}^\alpha\le m$, it follows that
$$
(m-1)\vep\le\E[X_{m,0}^\alpha]\le\PP(X_{m,0}^\alpha\le1)+m\,\PP(X_{m,0}^\alpha\ge2)=1+(m-1)\,\PP(X_{m,0}^\alpha\ge2).
$$
Rearranging the terms in the above equation, and recalling that $m\ge1+2/\vep$, it follows that for every $\alpha\in\Lambda$ we have that
\begin{equation}\label{eq:Xalpha2}
\PP(X_{m,0}^\alpha\ge2)\ge \vep-\frac{1}{m-1}\ge\vep/2.
\end{equation}
Now, the set $\Lambda$ is a priori random, but the quantity $\PP(X_{m,0}^\alpha\ge2)$ is not. The set $\Lambda$ has nevertheless full Lebesgue measure, and we conclude that the (nonrandom) set of $\alpha\in(-\infty,0)$ for which~\eqref{eq:Xalpha2} holds has full Lebesgue measure.

To finish the proof, we shall argue that the conclusion in~\eqref{eq:Xalpha2} implies that the Damron-Hanson geodesic measure $\nu$ would have to put positive mass on non-coalescing families of geodesics, contradicting Theorem~\ref{thmDH2}.

Set $\theta'=\theta+\pi$, and let $(\nu_\alpha)_{\alpha\ge0}$ denote the sequence of measures obtained as the push-forward through the map $\Psi_\alpha$ with respect to the direction $\theta'$. That is, $\nu_\alpha$ is a measure supported on families of geodesics between points in $\Z^2$ and the line $\ell_{-\alpha}^-$. Let $m$ be as above. For $k\ge1$ let $A_k$ denote the event that there exist $x,y\in I_{m,0}$ such that the paths encoded in $\eta$ starting at $x$ and $y$ remain disjoint until they first reach the boundary of the box $[-k,k]^2$. Then,
$$
\nu_\alpha(A_k)\ge\PP(X_{m,0}^\alpha\ge2),
$$
and since the set of $\alpha$ for which~\eqref{eq:Xalpha2} holds has full Lebesgue measure, we have
$$
\nu_n^\ast(A_k)=\frac1n\int_0^n\nu_\alpha(A_k)\,d\alpha\ge\vep/2.
$$
Since $A_k$ is an event depending on finitely many edges, it is closed. Hence, by the portmanteau theorem, it follows that for any subsequential limit $\nu$ of the sequence $(\nu_n^\ast)_{n\ge1}$ we have
$$
\nu(A_k)\ge\limsup_{n\to\infty}\nu_n^\ast(A_k)\ge\vep/2,
$$
uniformly in $k\ge1$. Sending $k\to\infty$, since $A_{k+1}\subseteq A_k$, continuity of measure gives that
$$
\nu\Big(\bigcap_{k\ge1}A_k\Big)=\lim_{k\to\infty}\nu(A_k)\ge\vep/2.
$$
On the event $\bigcap_{k\ge1}A_k$ there are vertices $x,y\in I_{m,0}$ such that the infinite paths encoded by $\eta$ and starting at $x$ and $y$ are disjoint. Hence, we conclude that with probability at least $\vep/2>0$ the measure $\nu$ is supported on families of geodesics that do not all coalesce. This is contradicted by part~\emph{(b)} of Theorem~\ref{thmDH2}.

\begin{remark}\label{rem:alt_proof}
It is conceivable that an alternative proof of Theorem~\ref{thm:flat_start} is possible, not relying on the work of~\cite{DH14}. One such approach would be to first note that an argument analogous to that giving~\eqref{eq:Xalpha2} will also give the stronger bound $\PP(X_{m,0}^\alpha\ge3)\ge\vep/2$, assuming $m\ge2+4/\vep$. One could then attempt to construct a blocking event, having a positive probability to occur, such that if it occurs, only two of the clusters counted by $X_{m,0}^\alpha$ could ``survive'' for an additional fixed number of steps. Since these blocking events ought to occur eventually, by virtue of being local and occurring with positive probability, they would decrease the number of surviving infections as $\alpha\to-\infty$. This ought to result in a contradiction with the initial assumption that $\PP(|C(0)|=\infty)>0$.

We expect that this alternative proof strategy, if viable, would amount to roughly the same calculations as the proof given above. We think that the above proof has its affordances, in that it illustrates the connection between coalescence and coexistence. However, obtaining a proof independent of~\cite{DH14} would also be of value, as it could be used to simplify some rather technical computations in showing that the geodesic measures introduced in that work are supported on coalescing families of geodesics.
\end{remark}

\paragraph{Acknowledgments.} This work was supported by the Swedish Research Council through the grants 2020-04479\_VR (MD and MS) and 2021-03964\_VR (DA).

\newcommand{\noopsort}[1]{}\def\cprime{$'$}

\end{document}